\documentclass[12pt]{amsart}

\usepackage[hmargin=2.6cm,bmargin=2.55cm,tmargin=3.05cm]{geometry}
\usepackage[usenames]{color}
\usepackage{graphbox}
\usepackage{subcaption}

\usepackage{mathtools}
\numberwithin{equation}{section}
\mathtoolsset{showonlyrefs,showmanualtags}

\usepackage{amsmath,amssymb,amsthm,amsfonts,enumerate,url,mathbbol,mathrsfs,esint,tikz,graphicx,pifont,float}
\usetikzlibrary{calc}
\usepackage{hyperref}

\usepackage[normalem]{ulem}
\usepackage[utf8]{inputenc}

\theoremstyle{plain}
\newtheorem{theorem}{Theorem}[section]
\newtheorem{lemma}[theorem]{Lemma}

\newtheorem{corollary}[theorem]{Corollary}

\theoremstyle{definition}
\newtheorem{definition}[theorem]{Definition}
\newtheorem{example}[theorem]{Example}

\newtheorem{remark}[theorem]{Remark}

\numberwithin{equation}{section}
\numberwithin{figure}{section}

\newcommand{\R}{\mathbb{R}}
\newcommand{\N}{\mathbb{N}}


\newcommand{\Graph}{\mathcal{G}} 
\newcommand{\EdgeSet}{\mathcal{E}} 
\newcommand{\VertexSet}{\mathcal{V}} 
\newcommand{\EndpointSet}{\mathcal{X}} 
\newcommand{\CutSet}{\mathcal{C}}
\newcommand{\Partition}{\mathcal{P}}

\newcommand{\NeuSet}{\mathcal{V}^1}


\newcommand{\parti}{{\mathcal P}} 



\def\:{\thinspace:\thinspace}

\title[Interlacing inequalities for spectral minimal partitions]{Interlacing and Friedlander-type inequalities for spectral minimal partitions of metric graphs} 

\subjclass[2010]{34B45, 35P15, 35R02, 49Q10, 81Q35}

\keywords{Metric graph; Laplacian; spectral minimal partition; spectral geometry}

\author[M.~Hofmann]{Matthias Hofmann}
\author[J.~B.~Kennedy]{James B.~Kennedy}

\address{Grupo de F\'isica Matem\'atica, Faculdade de Ci\^encias, Universidade de Lisboa, Campo Grande, Edif\'icio C6, P-1749-016 Lisboa, Portugal}
\email{mhofmann@fc.ul.pt}

\address{Departamento de Matem\'atica \emph{and} Grupo de F\'isica Matem\'atica, Faculdade de Ci\^encias, Universidade de Lisboa, Campo Grande, Edif\'icio C6, P-1749-016 Lisboa, Portugal}
\email{jbkennedy@fc.ul.pt}

\thanks{The authors wish to thank Gregory Berkolaiko and Delio Mugnolo for their thoughtful and helpful comments on an earlier version of this paper, and Andrea Serio and the anonymous referee for many useful suggestions on an intermediate version of it. Both authors were supported by the Funda\c{c}\~ao para a Ci\^encia e a Tecnologia, Portugal, via the project NoDES: Nonlinear Dispersive and Elliptic Systems, reference PTDC/MAT-PUR/1788/2020 (J.B.K.), and the programs ``Investigador FCT'', reference IF/01461/2015 (J.B.K.) and ``Bolseiro de Investiga\c{c}\~ao'',  reference PD/BD/128412/2017 (M.H.). The work of both authors was also partially supported by the COST Association (Action CA18232).}

\begin{document}
\begin{abstract}
We prove interlacing inequalities between spectral minimal energies of metric graphs built on Dirichlet and standard Laplacian eigenvalues, as recently introduced in [Kennedy \textit{et al}, Calc.\ Var.\ PDE \textbf{60} (2021), 61]. These inequalities, which involve the first Betti number and the number of degree one vertices of the graph, recall both interlacing and other inequalities for the Laplacian eigenvalues of the whole graph, as well as estimates on the difference between the number of nodal and Neumann domains of the whole graph eigenfunctions. To this end we study carefully the principle of \emph{cutting} a graph, in particular quantifying the size of a cut as a perturbation of the original graph via the notion of its \emph{rank}. As a corollary we obtain an inequality between these energies and the actual Dirichlet and standard Laplacian eigenvalues, valid for all compact graphs, which complements a version for tree graphs of Friedlander's inequalities between Dirichlet and Neumann eigenvalues of a domain. In some cases this results in better Laplacian eigenvalue estimates than those obtained previously via more direct methods.
\end{abstract}

\maketitle

\section{Introduction}
\label{sec:intro}

The study of spectral minimal partitions on domains, first introduced in \cite{CTV05}, is by now a well-established topic; see \cite{BBN18,BNHe17,BNL17,HHT09} and the references therein. Roughly speaking, the goal is to find the $k$-partition of a domain $\Omega \subset \R^d$, consider some eigenvalue, usually the first eigenvalue of the Dirichlet Laplacian, on each of the partition elements, and then seek the partition which minimizes some functional of these eigenvalues, usually their maximum. These are of interest not least because of a number of connections to the eigenvalues and eigenfunctions of the Dirichlet Laplacian on the whole of the domain $\Omega$, for example via their connection to nodal partitions, the nodal count and Pleijel's theorem on the number of nodal domains of any $k$-th eigenfunction of $\Omega$; see \cite[Section~10.2]{BNHe17} for a recent survey.

On compact metric graphs, the subject is newer and much less well developed. Apart from an earlier work studying nodal partitions in particular \cite{BaBeRaSm12} such spectral minimal partitions have only been systematically investigated very recently, since \cite{KeKuLeMu20}, where well-posedness and basic properties of a number of different spectral partitioning problems were established.

There are arguably two features which set spectral partitions of metric graphs apart from their domain counterparts, both arising from the fact that metric graphs are essentially one-dimensional manifolds with singularities (the vertices): 
\begin{enumerate}
\item far more general spectral functionals can be considered than on domains, including in particular considering the smallest nontrivial eigenvalue of the Laplacian with standard (i.e., natural, Neumann--Kirchhoff, Kirchhoff-continuity) vertex conditions at the boundary of each of the partition elements (or \emph{clusters}) instead of Dirichlet; and 
\item deciding how, and which, vertices may be cut to create the clusters also leads to different problems and different optima.
\end{enumerate}

Just as there is an intimate connection between the Dirichlet problem and the nodal domains of Laplacian eigenfunctions on the whole object (graph, domain or manifold), the partitions involving standard Laplacians are related to the \emph{Neumann domains} of the eigenfunctions of the whole graph, as introduced and studied recently \cite{AB19,ABBE20} (see below).

In the current context, broadly speaking, our goal is to explore the relationships between some of these different spectral partition problems, in particular as regards the optimal energies (the values of the functionals at the minimizers), both in comparison with each other and with the Laplacian eigenvalues of the whole graph. In doing so we will see strong parallels between these energies and the way eigenvalues (and eigenfunctions) usually behave.

In order to state our results more precisely, we first have to introduce more precisely which problems we will be considering; full details will be given in Section~\ref{sec:definitions}. Given a compact metric graph $\Graph$ and a $k$-partition $\Partition = (\Graph_1,\ldots,\Graph_k)$, $k \geq 1$, of $\Graph$, that is, a $k$-tuple of \emph{clusters} $\Graph_1,\ldots,\Graph_k$, which are closed connected subgraphs of $\Graph$ intersecting each other at at most a finite number of \emph{boundary points}, denote by $\lambda_1 (\Graph_i)$ the first eigenvalue of the Laplacian on $\Graph_i$ with Dirichlet conditions at the boundary points and standard conditions elsewhere, and by $\mu_2 (\Graph_i)$ the first nontrivial eigenvalue of the Laplacian with standard conditions everywhere; then it follows from \cite{KeKuLeMu20} that the following min-max problems are well posed,\footnote{Here we will be considering fewer classes of partitions, and fewer types of spectral energy, than in \cite{KeKuLeMu20}; we will thus adopt simpler notation. In the language and notation of \cite{KeKuLeMu20} we are interested in the most general case of \emph{connected partitions}, not necessarily exhaustive, and the case $p=\infty$. See Appendix~\ref{appendix:classes} for more details.}
\begin{displaymath}
\begin{aligned}
\doptenergy[k](\Graph)&= \inf\, \{\max \{\lambda_1 (\Graph_1),\ldots,\lambda_1(\Graph_k)\}: \Partition = (\Graph_1,\ldots,\Graph_k) \text{ $k$-partition of } \Graph \}\\
\noptenergy[k](\Graph)&= \inf\, \{\max \{\mu_2 (\Graph_1),\ldots,\mu_2(\Graph_k)\}: \Partition = (\Graph_1,\ldots,\Graph_k) \text{ $k$-partition of } \Graph \}
\end{aligned}
\end{displaymath}
($N$ for natural/Neumann--Kirchhoff), that is, that there always exists a minimizing $k$-partition $\Partition$. It turns out that these spectral minimal energies are in some ways good surrogates for the eigenvalues of Laplacian operators on the whole graph -- even in the ``$N$'' case -- , and certainly better than spectral minimal energies on domains vis-\`a-vis the domain eigenvalues. For example, it was shown in \cite{HKMP20} that such spectral minimal energies satisfy the same Weyl asymptotics as the eigenvalues of the Laplacian with standard vertex conditions on the graph, as well as a number of two-sided bounds strongly reminiscent of similar bounds on the graph eigenvalues (as, for example, may be found in \cite[Section~4]{BKKM17}). It is thus also natural to compare these quantities, both with each other and with Laplacian eigenvalues of the whole graph.

Our principal objective here is to establish sharp interlacing inequalities linking the quantities $\doptenergy[k]$ and $\noptenergylax[k]$: here and throughout we will suppose $\Graph$ to be a fixed connected, compact, finite metric graph (again, see Section~\ref{sec:definitions} for more details); $\beta$ will denote the first Betti number of $\Graph$, i.e., the number of independent cycles in the graph, and $|\NeuSet|$ the number of vertices of $\Graph$ of degree $1$, the \emph{leaves}.

\begin{theorem}\label{thm:intromain1}
For all $k \geq \max\{\beta,1\}$ we have
\begin{displaymath}
\noptenergylax[k](\Graph)\ge \doptenergy[k+1-\beta](\Graph).
\end{displaymath}
\end{theorem}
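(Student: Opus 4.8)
The plan is to take a minimizing $N$-partition and manufacture from it a Dirichlet partition having at least $k+1-\beta$ clusters whose energy does not exceed $\noptenergylax[k](\Graph)$; the claimed inequality then follows because $m\mapsto\doptenergy[m](\Graph)$ is nondecreasing (merging two adjacent clusters removes a Dirichlet condition and only lowers the relevant ground-state energies). Concretely, let $\Partition=(\Graph_1,\dots,\Graph_k)$ be a minimizing $N$-partition, so that $\max_i \mu_2(\Graph_i)=\noptenergylax[k](\Graph)=:\Lambda$, and on each cluster fix an eigenfunction $\psi_i$ for $\mu_2(\Graph_i)$. Since $\psi_i$ is orthogonal to the constants on the connected graph $\Graph_i$, it changes sign and hence vanishes somewhere; these zeros will be the new Dirichlet cut points.

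For the count, cut $\Graph$ at a set $Z$ of zeros of the $\psi_i$, imposing Dirichlet conditions there, while \emph{restoring} standard conditions at the original boundary points of $\Partition$, and call the resulting graph $\NewGraph$. Each cut at an interior point of an edge raises the Euler characteristic by one and cannot increase $\beta$, so the number $m$ of connected components of $\NewGraph$ satisfies
\[
m=\chi(\NewGraph)+\beta(\NewGraph)\ge\chi(\NewGraph)=\chi(\Graph)+|Z|=(1-\beta)+|Z|.
\]
Selecting one zero per cluster gives $|Z|=k$, hence $m\ge k+1-\beta$; the hypothesis $k\ge\max\{\beta,1\}$ is exactly what guarantees $k+1-\beta\ge1$. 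This is the point at which $\beta$ and the quantitative ``rank of a cut'' enter: the deficit $1-\beta$ in the index is precisely the Euler characteristic of the connected graph, and any cycles that the chosen cuts fail to open appear as a surplus $\beta(\NewGraph)>0$, i.e.\ as additional clusters.

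For the energy bound I would show $\lambda_1(C)\le\Lambda$ on each component $C$ of $\NewGraph$ by using a pasted test function: set $f$ equal to a real multiple of $\psi_{i}$ on each maximal piece $P$ of $C$ lying in a single cluster $\Graph_{i(P)}$, choosing the constants so that $f$ is continuous across the restored boundary points inside $C$. Then $f$ lies in the form domain of the Dirichlet problem on $C$ and vanishes at every point of $Z\cap\partial C$, since these are zeros of the adjacent $\psi_i$. Integrating by parts edge by edge and using $-\psi_i''=\mu_2(\Graph_i)\psi_i$ gives
\[
\int_C |f'|^2=\sum_{\text{pieces } P}\mu_2(\Graph_{i(P)})\int_P |f|^2\;+\;\sum_{v}f(v)\sum_{e\ni v}\partial_\nu f_e(v),
\]
and every vertex term vanishes: at cut points and at leaves of $\Graph$ because $f$ or $\partial_\nu f$ is zero, and at a restored boundary point $v$ because each $\psi_i$ \emph{individually} satisfies the Kirchhoff condition there (standard conditions hold everywhere on $\Graph_i$ in the definition of $\mu_2(\Graph_i)$), so that $\sum_{e\ni v}\partial_\nu f_e(v)=\sum_i c_i\sum_{e\ni v,\,e\subset\Graph_i}\partial_\nu\psi_{i,e}(v)=0$ regardless of the constants $c_i$. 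Hence $\int_C|f'|^2\le\Lambda\int_C|f|^2$, so $\lambda_1(C)\le\Lambda$, giving $\doptenergy[m](\Graph)\le\Lambda$ and therefore $\doptenergy[k+1-\beta](\Graph)\le\doptenergy[m](\Graph)\le\Lambda=\noptenergylax[k](\Graph)$.

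The main obstacle is the solvability of the continuity (matching) conditions for the constants $c_i$. Propagating a choice across $C$ is immediate when the pattern of pieces is tree-like, but consistency of the scalings may fail around a cycle of $C$, and a degeneracy occurs if some $\psi_i$ vanishes at a restored boundary point. I expect this to be the technical heart, and the interplay with the counting is exactly what makes a careful analysis necessary: an uncut cycle that obstructs the matching also forces $\beta(\NewGraph)>0$, hence $m>k+1-\beta$, providing slack to absorb a further cut. I would resolve it by exploiting the freedom in choosing $Z$ together with the ``rank of a cut'' estimates—selecting the zeros generically so that no $\psi_i$ vanishes at a restored boundary point, and adding Dirichlet cuts (which only increase $m$) to render each component's piece-adjacency consistent—so that an admissible, consistently scaled test function exists on every $C$.
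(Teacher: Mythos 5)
Your overall strategy is the one sketched heuristically at the start of Section~\ref{sec:Proofinterlacing}: paste scaled copies of the cluster eigenfunctions $\psi_i$, count components via the Euler characteristic (equivalently, the rank of the cut), and use the fact that each $\psi_i$ individually satisfies the Kirchhoff condition at a restored boundary vertex to control the Rayleigh quotient. Your counting and your integration-by-parts step are sound \emph{conditional on the matching succeeding}, and the gap is exactly where you locate it --- but neither of your proposed repairs closes it. First, ``selecting the zeros generically so that no $\psi_i$ vanishes at a restored boundary point'' is not available to you: the restored boundary vertices are those of the given minimizing partition $\Partition$, and whether some $\psi_i$ vanishes there is determined by $\Partition$ alone, not by your choice of $Z$. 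Second, and more seriously, ``adding Dirichlet cuts to render the piece-adjacency consistent'' is self-defeating: a new Dirichlet cut at a point $p$ is only usable if your test function vanishes at $p$, and on an obstructing cycle the pasted function (all scalings nonzero) offers no zero at the place you need to cut. The only way out is to set the function identically to zero on an entire piece; but then continuity at every restored boundary vertex of that piece forces the neighbouring scalings to vanish as well (unless you cut there too), so dead pieces propagate and clusters are lost --- losses which your count $m \ge k+1-\beta$ never sees. Quantifying exactly how many clusters can be lost in this way is the real content of the theorem; it is not slack that comes for free from $\beta(\NewGraph)>0$.

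The paper's proof avoids the simultaneous matching problem altogether, and this is the structural idea you are missing. Instead of restoring the Kirchhoff vertices and matching all scalings at once, it first cuts each cluster $\Graph_i$ along a nodal $2$-partition of $\psi_i$, producing a $2k$-cluster \emph{Dirichlet} partition of the cut graph $\Graph_\Partition$ whose energy is at most $\nenergy[k](\Partition)$ (Lemma~\ref{lem:interlacingestimate1}), and then glues the cut back \emph{one simple cut at a time} (Lemma~\ref{lem:usefultounderstand}). For a single gluing the matching problem is always solvable or harmlessly avoidable: either both traces vanish; or they are nonzero and belong to different clusters, in which case one rescales one cluster (a one-vertex condition, so no cycle consistency can arise); or else one zeroes out a single cluster. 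In every case at most one cluster is lost per gluing (Lemma~\ref{lem:constructiongeneral}), and since $\operatorname{rank}(\Graph_\Partition:\Graph)\le k-1+\beta$ (Lemma~\ref{lem:usefulestimate}), at least $2k-(k-1+\beta)=k+1-\beta$ clusters survive, each with first Dirichlet eigenvalue at most $\nenergy[k](\Partition)$. To rescue your one-shot construction you would effectively have to reprove this sequential bookkeeping, so I recommend reorganizing your argument along those lines.
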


\begin{theorem}\label{thm:intromain2}
For all $k \geq \beta+|\NeuSet| \geq 1$ we have
\begin{displaymath}
	\doptenergy[k](\Graph)\ge \noptenergylax[k+1-\beta  - |\NeuSet|](\Graph).
\end{displaymath}
\end{theorem}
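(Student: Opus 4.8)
The plan is to bound the Neumann energy from above by building, out of a minimizing Dirichlet partition, a Neumann partition with at least the required number of clusters and no larger energy. So fix a minimizing Dirichlet $k$-partition $\Partition=(\Graph_1,\dots,\Graph_k)$, set $\Lambda:=\doptenergy[k](\Graph)$ so that $\lambda_1(\Graph_i)\le\Lambda$ for all $i$, and let $\phi_i>0$ be the corresponding first Dirichlet eigenfunction on $\Graph_i$; thus $\phi_i$ vanishes at the cut points of $\Graph_i$ and, by the standard conditions there, has vanishing derivative at any leaf of $\Graph$ contained in $\Graph_i$.

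The Neumann clusters will straddle the old cut points and be assembled from pieces of the $\phi_i$. I would cut every cluster containing no leaf at an interior critical point $y_i$ of $\phi_i$ (where $\phi_i'(y_i)=0$), leaving the at most $|\NeuSet|$ leaf-containing clusters uncut; the resulting cut set $Y$ partitions $\Graph$ into connected components $\HGraph_1,\dots,\HGraph_m$, which I take as the candidate Neumann clusters. The crucial local fact is that on each piece $P$ (a component of some $\Graph_i$ after removing $Y$) one has $\int_P|\phi_i'|^2=\lambda_1(\Graph_i)\int_P\phi_i^2\le\Lambda\int_P\phi_i^2$: integrating by parts, every endpoint of $P$ is either an old cut point, where $\phi_i=0$, or a new cut or a leaf, where $\phi_i'=0$, while the standard conditions make all interior boundary terms cancel. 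Because the pieces meeting at an old cut point all vanish there, any assignment of amplitudes to the pieces making up a given $\HGraph_j$ yields an admissible $H^1(\HGraph_j)$ function whose Rayleigh quotient, being a weighted average of the piece quotients, is at most $\Lambda$. Whenever $\HGraph_j$ contains at least two pieces the single constraint $\int_{\HGraph_j}u=0$ still leaves such a $u\ne0$, so $\mu_2(\HGraph_j)\le\Lambda$, and hence the whole partition has Neumann energy at most $\Lambda$.

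For the count, each leaf furnishes a ``free'' (zero-derivative) end, so its cluster need not be cut; cutting the remaining clusters once each gives $|Y|\ge k-|\NeuSet|$, and Euler's relation for the cut graph ($\chi=1-\beta$ before and $\chi=m-\beta_{\mathrm{cut}}$ after, while each cut raises $\chi$ by one) yields
\[
m \;=\; 1-\beta+|Y|+\beta_{\mathrm{cut}}\;\ge\; k+1-\beta-|\NeuSet|,
\]
where $\beta_{\mathrm{cut}}\ge0$ counts the cycles surviving the cut. We have thus produced a Neumann partition into $m\ge k+1-\beta-|\NeuSet|$ clusters of energy at most $\Lambda=\doptenergy[k](\Graph)$, and the claim $\doptenergy[k](\Graph)\ge\noptenergylax[k+1-\beta-|\NeuSet|](\Graph)$ follows on invoking the monotonicity of the optimal Neumann energy in the number of clusters; the hypothesis $k\ge\beta+|\NeuSet|$ is exactly what makes the target index at least one.

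The main obstacle is the topological bookkeeping suppressed above, which is precisely what the rank-of-a-cut calculus is meant to organise. One must guarantee that every component $\HGraph_j$ contains at least two pieces, equivalently at least one old cut point, which can fail for a piece bounded only by a leaf and a new cut, or when a cut point is incident to a single cluster on both sides; and one must ensure a genuine interior critical point $y_i$ exists in each non-leaf cluster, handling the case where $\phi_i$ peaks at an interior vertex rather than inside an edge. Phrasing the passage as one global surgery of controlled rank, instead of cluster by cluster, is what makes these points uniform, lets one reduce to the nondegenerate situation with the leaves in distinct clusters, and in fact pins the index down to the sharp value $k+1-\beta-|\NeuSet|$ (taking $\beta_{\mathrm{cut}}=0$) rather than merely bounding it.
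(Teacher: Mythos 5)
Your construction is, in outline, exactly the one the paper uses (glue pieces of the cluster Dirichlet eigenfunctions across the old cut vertices with matched amplitudes, bound Rayleigh quotients by the integration-by-parts identity, count clusters by rank/Euler bookkeeping), and the pieces you do prove -- the identity $\int_P|\phi_i'|^2=\lambda_1(\Graph_i)\int_P\phi_i^2$, the admissibility of arbitrary amplitude assignments, the count $m\ge k+1-\beta-|\NeuSet|$, and the final monotonicity step -- are fine. But the step you flag as an ``obstacle'' is not a technicality: it is the entire content of the theorem, and your proposed remedy (``one global surgery of controlled rank'', ``reduce to the nondegenerate situation'') is an assertion, not an argument. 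Nothing in the rank calculus prevents a component $\HGraph_j$ from consisting of a single piece, and for such a component no mean-zero combination exists, so you obtain no bound on $\mu_2(\HGraph_j)$ whatsoever. This failure is concrete. Let $\Graph$ be a tadpole: leaf $z$ joined to $v$ by an edge of length $1$, an edge of length $3$ from $v$ to $w$, and a loop of length $2\delta$ at $w$; here $\beta=|\NeuSet|=1$ and $k=2=\beta+|\NeuSet|$ is admissible. Take the exhaustive $2$-partition with $\Graph_2$ the edge $zv$ and $\Graph_1$ the edge $vw$ together with the loop cut open at $w$. By Definition~\ref{def:cutandboundary}(ii) \emph{all} parts of the cut vertex $w$ are boundary (Dirichlet) points of $\Graph_1$, so its ground state is $\sin(\pi x/3)$ on the edge $vw$ and vanishes identically on the opened loop -- note your standing assumption $\phi_i>0$ already fails here, since first Dirichlet eigenfunctions on graphs can vanish on whole subregions. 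Your rule cuts $\Graph_1$ at its unique nonzero extremum, the midpoint of $vw$, and the component on the far side is a lasso (half of $vw$ plus the loop) whose only incident cut vertex, $w$, is glued to itself: it is a single piece, carrying a single nonnegative function of positive integral, so no mean-zero test function can be formed from your data; and indeed $\mu_2$ of that lasso tends to $4\pi^2/9>\pi^2/4=\denergy[2](\Partition)$ as $\delta\to0$. So the claim ``the whole partition has Neumann energy at most $\Lambda$'' is false for a perfectly admissible partition, and since your argument never uses minimality of $\Partition$ beyond $\lambda_1(\Graph_i)\le\Lambda$, the gap is structural, not an artifact of a bad example.

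Closing this gap is what the paper's Lemma~\ref{lem:reverseestimate} does, and the mechanism is not the one you sketch. The paper cuts at \emph{all} local nonzero extrema of all the $u_i$ (not one point per cluster), and then shows every cluster of the resulting partition $\Partition'$ contains at least two Neumann domains by combining three facts: the boundary sets of $\Partition$ and $\Partition'$ are disjoint; on each Neumann domain only one $u_i$ survives, with one sign; and no eigenfunction has a strictly positive local minimum, so a cluster of $\Partition'$ bounded by extrema alone cannot consist of a single Neumann domain. Moreover, the counting is not done uniformly cluster-by-cluster: clusters containing a leaf \emph{or a cycle} of $\Graph$ (the malign ones) are exempted from producing two Neumann domains, and the proof of the theorem separately shows that a partition of rank $k-1+r$ has at most $\beta+|\NeuSet|-r$ malign clusters, which combined with the rank identity $\operatorname{rank}(\Graph_{\widetilde\Partition}:\Graph_{\Partition'})=k-1+r$ yields the index $k+1-\beta-|\NeuSet|$. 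Your ``each leaf furnishes a free end'' dichotomy captures only half of this distinction, and the self-gluing configuration above (a cut vertex ``incident to a single cluster on both sides'', in your words, which is precisely where positivity of the ground state can fail) is exactly what the benign/malign bookkeeping and the all-extrema cut are designed to control. As written, your proposal names the right difficulty but leaves it unresolved, and resolving it is the proof.
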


A consequence of these inequalities is that we can relate these spectral minimal energies with the eigenvalues of the Laplacian on the whole graph, both with standard conditions at all vertices and with Dirichlet conditions at all vertices. Indeed, denote by $\mu_k (\Graph)$ the $k$-th eigenvalue of the Laplacian with standard conditions on $\Graph$ (starting at $\mu_1 (\Graph) = 0$ and counting multiplicities) and $\lambda_k (\Graph, \VertexSet^D)$ the $k$-th eigenvalue of the Laplacian with Dirichlet conditions at a distinguished set $\VertexSet^D$ of Dirichlet vertices and standard conditions on the rest, which we abbreviate to $\lambda_k (\Graph) := \lambda_k (\Graph, \VertexSet)$ for when \emph{all} vertices are Dirichlet vertices. Then the following result is a fairly direct consequence of Theorem~\ref{thm:intromain1}.

\begin{corollary}\label{thm:intromainRohleder}
Let $\Graph$ be a (connected, compact, finite) metric graph with first Betti number $\beta \geq 0$. Then for all $k \geq \beta + 1$ we have
\begin{equation}
\label{eq:intromainRohleder}
	\lambda_k (\Graph) \ge  \noptenergylax[k](\Graph) \ge \doptenergy[k+1-\beta](\Graph) \ge \mu_{k+1-\beta} (\Graph).
\end{equation}
\end{corollary}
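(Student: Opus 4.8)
The plan is to observe that the central inequality $\noptenergylax[k](\Graph)\ge\doptenergy[k+1-\beta](\Graph)$ is precisely Theorem~\ref{thm:intromain1}, which applies here because $k\ge\beta+1$ forces $k\ge\max\{\beta,1\}$. It therefore remains only to establish the two outer estimates, and I would prove each by an elementary variational or counting argument independent of the interlacing machinery, valid in fact for all admissible indices.

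For the rightmost inequality $\doptenergy[k+1-\beta](\Graph)\ge\mu_{k+1-\beta}(\Graph)$ I would invoke the min-max characterisation of the standard eigenvalues. Write $n:=k+1-\beta$ and fix any $n$-partition $\Partition=(\Graph_1,\dots,\Graph_n)$. On each cluster let $\phi_i$ be a first eigenfunction for $\lambda_1(\Graph_i)$ (Dirichlet at the boundary points, standard elsewhere); since $\phi_i$ vanishes at the boundary points of $\Graph_i$, extension by zero gives a function in $H^1(\Graph)$ supported on $\Graph_i$. The $\phi_i$ then have pairwise essentially disjoint supports, so they are orthogonal in $L^2(\Graph)$ and span an $n$-dimensional subspace $S\subset H^1(\Graph)$ on which, for $u=\sum_i c_i\phi_i$, the Rayleigh quotient $\int_\Graph|u'|^2/\int_\Graph|u|^2$ is a convex combination of the $\lambda_1(\Graph_i)$ and hence at most $\max_i\lambda_1(\Graph_i)$. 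The min-max principle for $\mu_n(\Graph)$ yields $\mu_n(\Graph)\le\max_i\lambda_1(\Graph_i)$, and taking the infimum over all $n$-partitions gives $\mu_{k+1-\beta}(\Graph)\le\doptenergy[k+1-\beta](\Graph)$.

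For the leftmost inequality $\lambda_k(\Graph)\ge\noptenergylax[k](\Graph)$ I would exploit the fact that imposing Dirichlet conditions at \emph{all} vertices decouples the problem edge by edge, so the spectrum of $\lambda_\bullet(\Graph)$ is the union, over the edges $e$ of length $\ell_e$, of the interval eigenvalues $(j\pi/\ell_e)^2$, $j\ge1$. Consequently $\#\{i:\lambda_i(\Graph)\le\Lambda\}=\sum_e\lfloor\ell_e\sqrt{\Lambda}/\pi\rfloor$ for every $\Lambda\ge0$. Setting $\Lambda=\lambda_k(\Graph)$ makes the left-hand side at least $k$, while $\lfloor\ell_e\sqrt{\Lambda}/\pi\rfloor$ is exactly the number of pairwise disjoint subintervals of length $\pi/\sqrt{\Lambda}$ that fit inside edge $e$. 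I would therefore cut out $k$ such disjoint subintervals, regard each as a cluster carrying standard conditions at its endpoints, and note that a Neumann interval of length $\pi/\sqrt{\Lambda}$ has $\mu_2=\Lambda$. This produces a (connected, non-exhaustive) $k$-partition with $\max_i\mu_2(\Graph_i)=\lambda_k(\Graph)$, whence $\noptenergylax[k](\Graph)\le\lambda_k(\Graph)$.

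The genuinely delicate point is the leftmost inequality: one must check that the interval-packing count coincides exactly with the edgewise Dirichlet eigenvalue count (both equal $\sum_e\lfloor\ell_e\sqrt{\Lambda}/\pi\rfloor$), and that the resulting subintervals are admissible clusters in the sense of Section~\ref{sec:definitions} — in particular that connected, non-exhaustive partitions are permitted and that the equality $\mu_2=(\pi/L)^2$ for a Neumann interval of length $L$ is indeed the relevant first nontrivial eigenvalue. The remaining two inequalities are comparatively routine once the definitions and the min-max principle are in place.
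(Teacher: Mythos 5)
Your proposal is correct and follows essentially the same route as the paper: the middle inequality is Theorem~\ref{thm:intromain1}, the rightmost inequality is the standard min-max argument with zero-extended cluster eigenfunctions (which the paper cites rather than writes out), and the leftmost inequality is obtained, exactly as in the paper, from the edgewise decoupling of the all-Dirichlet spectrum together with a non-exhaustive test $k$-partition by subintervals whose first nontrivial standard eigenvalue is at most $\lambda_k(\Graph)$. The only difference is cosmetic: you pack $k$ disjoint subintervals of the critical length $\pi/\sqrt{\lambda_k(\Graph)}$, whereas the paper subdivides each edge $e_i$ into $j_i$ equal subintervals (the nodal domains of the $j_i$-th Dirichlet eigenfunction); both constructions rest on the same count $j_i=\lfloor \ell_{e_i}\sqrt{\lambda_k(\Graph)}/\pi\rfloor$, and your variant merely sidesteps the multiplicity bookkeeping that the paper has to mention when $\lambda_k(\Graph)$ is a multiple eigenvalue.
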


This, and indeed the principle of interlacing inequalities between such minimal energies, have several natural motivations. For one, rather suprisingly, combining Corollary~\ref{thm:intromainRohleder} with an upper bound on $\noptenergylax[k]$ obtained in \cite{HKMP20} results in the following bound which, even as a bound on $\mu_k$, actually turns out to be better for many classes of graphs than the central bound \cite[Theorem~4.9]{BKKM17}, as we shall see below.

\begin{corollary}\label{thm:firstapplication}
Let $\Graph$ be a metric graph with first Betti number $\beta\in \mathbb N_0$ and total length $L$, and suppose there exist $n \leq |\EdgeSet|$ \emph{Eulerian paths} covering $\Graph$, crossing at at most finitely many points. Then for all $k \geq \max\{n + 1  -\beta,1\}$ we have
\begin{equation}
\label{eq:firstapplication}
	 \mu_k (\Graph) \leq \doptenergy[k](\Graph) \le \frac{\pi^2}{L^2} (k+n+\beta-2)^2.
\end{equation}
\end{corollary}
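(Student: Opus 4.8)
The plan is to derive both inequalities in \eqref{eq:firstapplication} from the chain of Corollary~\ref{thm:intromainRohleder}, using it directly for the lower bound $\mu_k \le \doptenergy[k]$ and combining it for the upper bound with an \emph{a priori} length estimate on the Neumann energy imported from \cite{HKMP20}. Beyond citing that estimate, essentially all the work is careful bookkeeping of the index shifts so as to land on the stated range $k \ge \max\{n+1-\beta,1\}$.

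For the left-hand inequality, the last link $\doptenergy[k+1-\beta]\ge\mu_{k+1-\beta}$ of \eqref{eq:intromainRohleder} (valid for $k\ge\beta+1$) becomes, after replacing $k$ by $k+\beta-1$, the statement $\doptenergy[k](\Graph)\ge\mu_k(\Graph)$ for every $k\ge 2$. The remaining case $k=1$ is trivial: a $1$-partition is $\Graph$ itself carrying no boundary (hence purely standard) conditions, so $\doptenergy[1](\Graph)=\mu_1(\Graph)=0$. Thus $\mu_k(\Graph)\le\doptenergy[k](\Graph)$ for all $k\ge 1$, which covers the required range.

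For the right-hand inequality I would first rewrite the middle link $\noptenergylax[k]\ge\doptenergy[k+1-\beta]$ of \eqref{eq:intromainRohleder}, again via $k\mapsto k+\beta-1$, as $\doptenergy[k](\Graph)\le\noptenergylax[k+\beta-1](\Graph)$, valid for $k\ge 2$. I would then invoke the Eulerian-path upper bound of \cite{HKMP20}: a graph covered by $n$ Eulerian paths of total length $L$ satisfies $\noptenergylax[j](\Graph)\le\frac{\pi^2}{L^2}(j+n-1)^2$ once $j\ge n$, the idea being that unfolding the $n$ paths exhibits $\Graph$ as a union of intervals of total length $L$, on which near-equipartitions into Neumann--Neumann subintervals are explicit competitors. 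Setting $j=k+\beta-1$ then yields
\begin{equation*}
\doptenergy[k](\Graph)\le\noptenergylax[k+\beta-1](\Graph)\le\frac{\pi^2}{L^2}\bigl(k+\beta-1+n-1\bigr)^2=\frac{\pi^2}{L^2}(k+n+\beta-2)^2,
\end{equation*}
which is precisely \eqref{eq:firstapplication}.

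The point requiring care, and the expected main obstacle, is checking that the single hypothesis $k\ge\max\{n+1-\beta,1\}$ furnishes all the index constraints at once. For $k\ge 2$ the bound $k\ge n+1-\beta$ is exactly $j=k+\beta-1\ge n$, which is what the \cite{HKMP20} estimate requires; and when $n\le\beta$ (so that the maximum equals $1$) the weaker constraint $k\ge 2$ already gives $j=k+\beta-1\ge\beta+1>n-1$, so $j\ge n$ holds automatically. The endpoint $k=1$, which can occur only when $n\le\beta$, is handled directly since $\doptenergy[1]=0$ lies below the nonnegative right-hand side. Confirming that these two re-indexings remain mutually consistent across the whole range, including the degenerate small-$k$ cases, is the essential verification.
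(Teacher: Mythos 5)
Your proof is correct and follows essentially the same route as the paper's: both combine the chain of Corollary~\ref{thm:intromainRohleder} (reindexed via $k \mapsto k+\beta-1$) with the Eulerian-path bound $\noptenergylax[j](\Graph) \le \frac{\pi^2}{L^2}(j+n-1)^2$ for $j \ge n$ from \cite[Theorem~5.3]{HKMP20}. Your explicit verification of the index ranges and of the degenerate case $k=1$ (where $\doptenergy[1](\Graph)=0$) is slightly more careful than the paper's two-line argument, which leaves those checks implicit.
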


Similarly, Theorem~\ref{thm:intromain2} combined with a sharp lower bound on $\noptenergylax[k]$ from \cite{HKMP20} leads to a lower bound on $\doptenergy[k]$ better than existing ones in many cases.

\begin{corollary}
\label{cor:secondapplication}
Let $\Graph$ be a metric graph with first Betti number $\beta\in \mathbb N_0$, $|\NeuSet|$ vertices of degree one, and total length $L$. Then for all $k \geq \beta+|\NeuSet| \geq 1$ we have
\begin{equation}
\label{eq:secondapplication}
	\doptenergy[k](\Graph) \geq \frac{\pi^2}{L^2} (k + 1 - \beta - |\NeuSet|)^2.
\end{equation}
\end{corollary}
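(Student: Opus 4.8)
The plan is to chain Theorem~\ref{thm:intromain2} with a sharp lower bound on the Neumann-type minimal energy $\noptenergylax[j]$ established in \cite{HKMP20}. The relevant bound there is
\begin{equation*}
\noptenergylax[j](\Graph)\ge \frac{\pi^2}{L^2}\,j^2,\qquad j\ge 1,
\end{equation*}
which is sharp (equality holds for an interval of length $L$ cut into $j$ equal subintervals). Conceptually it follows from the observation that any $j$-partition of $\Graph$ must contain a cluster of total length at most $L/j$, combined with the elementary sharp estimate $\mu_2\ge \pi^2/\ell^2$ for the first nontrivial standard Laplacian eigenvalue of a connected compact graph of total length $\ell$; I will simply invoke it as a known input.

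First I would fix $k\ge \beta+|\NeuSet|\ge 1$ and apply Theorem~\ref{thm:intromain2}, which is valid in exactly this range, to obtain
\begin{equation*}
\doptenergy[k](\Graph)\ge \noptenergylax[k+1-\beta-|\NeuSet|](\Graph).
\end{equation*}
Under the standing hypothesis the shifted index $j:=k+1-\beta-|\NeuSet|$ is a positive integer, so the right-hand side is well defined and the lower bound from \cite{HKMP20} applies to it.

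Then I would substitute $j=k+1-\beta-|\NeuSet|$ into that bound, giving
\begin{equation*}
\noptenergylax[k+1-\beta-|\NeuSet|](\Graph)\ge \frac{\pi^2}{L^2}\,(k+1-\beta-|\NeuSet|)^2,
\end{equation*}
and concatenate it with the previous display to conclude \eqref{eq:secondapplication}.

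Since both ingredients are already available, there is no genuine obstacle in this argument; the only point requiring care is verifying that the shifted index $j=k+1-\beta-|\NeuSet|$ remains in the admissible range $j\ge 1$ in which the lower bound on $\noptenergylax[j]$ holds, which is guaranteed precisely by the hypothesis $k\ge\beta+|\NeuSet|$. In the boundary case $k=\beta+|\NeuSet|$ the estimate reduces to $\noptenergylax[1](\Graph)=\mu_2(\Graph)\ge \pi^2/L^2$, i.e.\ the sharp lower bound for the first nontrivial standard eigenvalue of the whole graph, confirming consistency at the extreme end of the range.
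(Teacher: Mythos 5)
Your proposal is correct and follows exactly the paper's own proof: both chain Theorem~\ref{thm:intromain2} with the sharp lower bound $\noptenergylax[j](\Graph) \geq \frac{\pi^2 j^2}{L^2}$ from \cite[Theorem~3.1]{HKMP20}, applied at the shifted index $j = k+1-\beta-|\NeuSet|$, which the hypothesis $k \geq \beta + |\NeuSet|$ keeps in the admissible range $j \geq 1$. Your additional remarks on the provenance of the HKMP20 bound and the boundary case $k = \beta + |\NeuSet|$ are correct but not needed.
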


But at a more fundamental level a key motivation for Theorems~\ref{thm:intromain1} and~\ref{thm:intromain2} arises from the effect on graph Laplacians, of so-called \emph{surgery} on the graph. Our method of proof of these two theorems, which involves studying \emph{cuts} of a graph and the impact this has on being able to glue together eigenfunctions on different parts of the graph, is intimately related to both the nodal count (and distribution of the nodal domains) of the eigenfunctions, and the number and distribution of the corresponding Neumann domains.

Cutting a graph at a point is a simple operation which changes the topology of the graph and which has a predictable effect on the eigenvalues of the graph, as it represents a finite rank perturbation of the associated Laplacian (see \cite[Sections~3.1 and~4.1]{BKKM19}). Cutting a graph at exactly the points $x$ where the $k$-th standard Laplacian eigenfunction $u_k(x)$ equals $0$ leads to a \emph{nodal partition}, the clusters of which are the nodal domains of the eigenfunction. The number and distribution of these has been explored at some length; see for example \cite{ABB18,BaBeRaSm12,Ber08}. The Neumann domains arise as the clusters of a partition cut at the points where $u_k'(x)=0$; the number of Neumann domains behaves similarly as a function of $k$, at least in the ``generic'' case where (among other things) all cuts are made away from the vertices \cite{AB19}. Perhaps most notably for us, it has been shown in the generic case that the difference between the number of nodal domains $\nu(k)$ and the number of Neumann domains $\xi(k)$ of $u_k$ satisfies \emph{exactly} the same bounds as the indices appearing in Theorems~\ref{thm:intromain1} and~\ref{thm:intromain2} \cite[Proposition~3.1(1)]{AB19} (see also \cite[Proposition~11.2]{ABBE20}):
\begin{equation}
\label{eq:motivationalnodalstuff}
	1 - \beta \leq \nu(k) - \xi(k) \leq \beta + |\NeuSet| - 1.
\end{equation}
In fact, \eqref{eq:motivationalnodalstuff} can also be recovered from our proofs (see Remarks~\ref{rmk:onepartoftheinequalityinintro} and~\ref{rmk:theotherinequalityforintro}). 
Despite the completely different approaches (here we study cutting and pasting eigenfunctions arising from different minimal partitions, in \cite{AB19} the point of departure being the whole graph eigenfunctions) this hints at a much deeper connection between these spectral minimal partitions and the nodal and Neumann domain patterns of the whole graph eigenfunctions, analogous to or extending the connection between nodal domains and partitions explored in \cite{BaBeRaSm12}, which will be left to future investigation to explore fully. 

Somewhat related is the idea of changing a vertex condition from standard to Dirichlet (or vice versa), another finite rank perturbation which leads to interlacing inequalities between Dirichlet and standard Laplacian eigenvalues. A consequence of the min-max characterization of the eigenvalues is the interlacing inequality which in the notation introduced above reads
\begin{displaymath}
	\lambda_{k+|\VertexSet^D|}(\Graph, \VertexSet^D)\ge \mu_{k+|\VertexSet^D|}(\Graph)\ge \lambda_k(\Graph, \VertexSet^D)
\end{displaymath}
(again, see \cite[Section~3.1]{BKKM19}, or, e.g., \cite[Section~3.1.6]{BeKu13}). This is reminiscent of, or rather actually at odds with, Friedlander's inequalities between Dirichlet and Neumann Laplacian eigenvalues on domains in $\R^d$, $d \geq 2$ (see \cite{Fri91}), which assert that $\lambda_k (\Omega) \geq \mu_{k+1} (\Omega)$ for all $k \in \N$; in fact the inequality was later shown to be always strict \cite{Fil05}. Similar results also can be obtained for compact manifolds (see \cite{AM12}). On metric graphs this is rather difficult to recover precisely because of the interlacing inequalities, or the related idea that the difference between Dirichlet and standard vertex conditions is somehow ``smaller'' than the difference between Dirichlet and Neumann boundary conditions. Our Corollary~\ref{thm:intromainRohleder} is a complement to the inequality proved in \cite[Theorem~4.1]{Roh17} for \emph{tree graphs} $\Graph$ (i.e., with $\beta = 0$), which states that
\begin{equation}
\label{eq:rohleder}
	\lambda_k (\Graph) \geq \mu_{k+1} (\Graph), \qquad k \in \N,
\end{equation}
if we impose Dirichlet conditions on all vertices $\VertexSet^D = \VertexSet$ of $\Graph$. Note, however, that \eqref{eq:rohleder} actually holds under the weaker assumption that Dirichlet conditions only be imposed on the leaves of the tree, with standard conditions at all other vertices; see \cite[Lemma~4.5]{BBW15} (with $t=0$).

This paper is organized as follows. We give our notation and introduce the basic notions we will need, such of those of cuts and partitions, in Section~\ref{sec:definitions}. This also entails a rather detailed study of the notion of the cut of a graph; to this end we will introduce the \emph{rank} of a cut, which allows to quantify the corresponding finite rank perturbation of the corresponding function spaces considered. A description of said function spaces, as well as the spectral quantities we will be considering, is given in Section~\ref{sec:func-spec}.  Section~\ref{sec:Proofinterlacing} is devoted to the proof of Theorem~\ref{thm:intromain1}, Section~\ref{sec:proof2} to the proof of Theorem~\ref{thm:intromain2}; in both cases, at the beginning of the section we include a somewhat less formal explanation of where the respective indices appearing in the inequalities come from. In Section~\ref{sec:spectralinequalities} we prove Corollaries~\ref{thm:intromainRohleder},~\ref{thm:firstapplication} and~\ref{cor:secondapplication}, give several examples of graphs where the bounds in \eqref{eq:firstapplication} and \eqref{eq:secondapplication} are better than bounds obtained elsewhere, and also study the case of certain \emph{windmill graphs}, introduced in \cite{KuSe18}, where there is equality everywhere in \eqref{eq:firstapplication}. Finally, in Appendix~\ref{appendix:classes}, we put our results (in particular as regards the types of partitions and energies considered) into the framework of \cite{KeKuLeMu20}.

\section{Metric graphs and partitions}
\label{sec:definitions}
\subsection{Basic assumptions}
\label{sec:basic}
We start with our formalism for metric graphs. Throughout the article we will adopt the framework used in \cite{KeKuLeMu20} and \cite{Mug19}, though it will be necessary to recall some of the particulars here. For us, a \emph{metric graph} $\Graph=(\VertexSet, \EdgeSet)$ will consist of a finite union $\EdgeSet=\{[x_1, x_2], \cdots, [x_{2n-1}, x_{2n}]\}$ of closed, bounded intervals in $\R$, turned into a compact metric space by gluing the intervals at the \emph{endpoint set} $\EndpointSet=\{x_1, \ldots, x_{2n}\}$ in an appropriate sense,\footnote{Thus, for us, a \emph{metric graph} is what is usually called a \emph{compact metric graph} in the literature.} via a partition $\VertexSet =\{v_1, \ldots, v_m\}$ on $\EndpointSet$,
\begin{displaymath}
	\EndpointSet = v_1 \sqcup \ldots \sqcup v_m.
\end{displaymath}
We call each element of $\VertexSet$, which is formally a set of endpoints, a \emph{vertex} of $\Graph$; we call $\VertexSet$ the \emph{vertex set} of $\Graph$ and $\EdgeSet$ the \emph{edge set} of $\Graph$, whose elements, the edges, will be denoted by $e \in \EdgeSet$. The \emph{degree} of a vertex $v_i$ is its cardinality as a set of endpoints; we denote by $\NeuSet \subset \VertexSet$ the set of all vertices of degree one, the \emph{leaves}.

The natural metric on $\Graph$ arises by identifying each vertex with a point, treating each edge $e \in \EdgeSet$ as a subset of $\Graph$, and introducing paths between pairs of points on the graph in accordance with this identification (cf.\ \cite{Mug19}). The length of an edge $e$, i.e., the length of the interval to which it corresponds, will be denoted by $|e|$; the \emph{total length} of $\Graph$ will be denoted by
\begin{displaymath}
	L:= |\Graph|= \sum_{e\in \EdgeSet} |e|.
\end{displaymath}
We say $\Graph$ is connected if it is connected as a metric space, that is, if the distance between any two points in the graph is finite.

We can define an equivalence relation on the class of all such metric graphs via isometric isomorphisms, bijective mappings between graphs which preserve the metric; if two graphs are isometrically isomorphic to each other, then we are in one or both of the following situations:
\begin{enumerate}[(i)]
\item the edge and vertex sets of one graph are permutations (i.e.\ relabelings) of the edge and vertex sets, respectively, of the other;
\item the graphs differ by the presence of dummy vertices, i.e. vertices of degree $2$ that can be added at will essentially subdividing an interval in two intervals of total length of the original interval.
\end{enumerate} 
We will always identify graphs that are isometrically isomorphic, and choose a convenient representative of the corresponding equivalence classes (called \emph{ur-graphs} in \cite{KeKuLeMu20}) in any given context, without further comment.

\subsection{Cuts of a graph}
The notion of \emph{cutting a graph} will be used extensively throughout the paper. While it is by no means new -- among other things it has appeared frequently in the context of spectral geometry of graphs as a prototypical ``surgery principle'' (see, for example, \cite{BKKM19} and the references therein) and was also used in \cite{KeKuLeMu20} as the basis for defining partitions of graphs -- we will need to study this notion far more carefully than in those works, and introduce a number of new concepts around it. We thus start with the basic definition.

\begin{definition}
\label{def:cut}
Let $\Graph$ and $\Graph'$ be metric graphs. Then $\Graph'$ is a \emph{cut} (or \emph{cut graph}) of $\Graph$ if
\begin{enumerate}[(i)]
\item $\Graph$ and $\Graph'$ have a common edge set, and
\item for  all $v'\in \VertexSet'$ there exists $v\in \VertexSet$ such that $v' \subset v$.
\end{enumerate}
We say $v\in \VertexSet$ is a \emph{cut vertex} if there exists $v'\in \VertexSet'$ such that $v'\subsetneq v$, and denote the set of cut vertices, the \emph{cut set}, by $\CutSet(\Graph': \Graph)$, which we treat as a subset of $\Graph$.  If $\CutSet(\Graph':\Graph) = \{v\}$, then we say $\Graph$ has been cut at $v$.
\end{definition}

In practice this definition allows for cutting through the interior of edges of $\Graph$, as in accordance with our observation at the end of Section~\ref{sec:basic} we may always insert dummy vertices at the cut locations before making the cut. The process of ``undoing'' a cut, i.e., reverting to $\Graph$ from $\Graph'$, will as usual be called gluing. In particular, we say $\Graph$ has been obtained from $\Graph'$ by gluing the vertices $v_1,\ldots,v_n \in \VertexSet (\Graph')$ if $\Graph'$ is a cut of $\Graph$ such that $\CutSet(\Graph':\Graph) = \{v\}$, where $v = v_1 \cup \ldots \cup v_n$.

The next notion, while simple will be central for all our interlacing results; intuitively, by quantifying the ``size'' of a cut (see Figure~\ref{fig:cutgraphsexamples}) it gives us direct control on the codimensions of the spaces of functions defined on the respective graphs.

\begin{figure}[h]
	\centering
	\begin{minipage}{.3\textwidth}
	\includegraphics{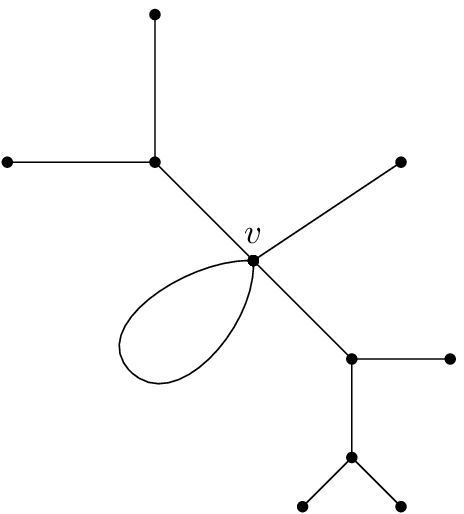}\\ \centering{\textbf{(a)}}
	\end{minipage} \hfill \begin{minipage}{.3\textwidth}
	\includegraphics{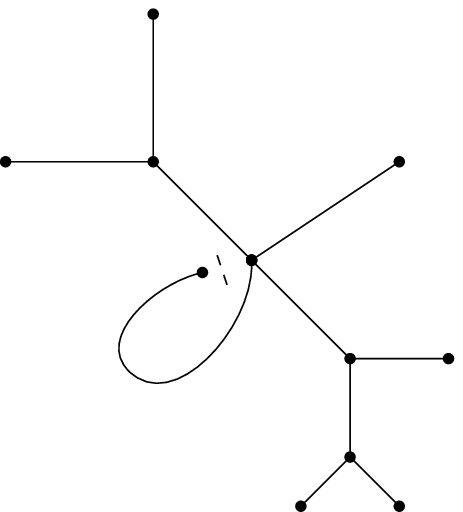}\\ \centering{\textbf{(b)}}
	\end{minipage} \hfill \begin{minipage}{.3\textwidth}
	\includegraphics{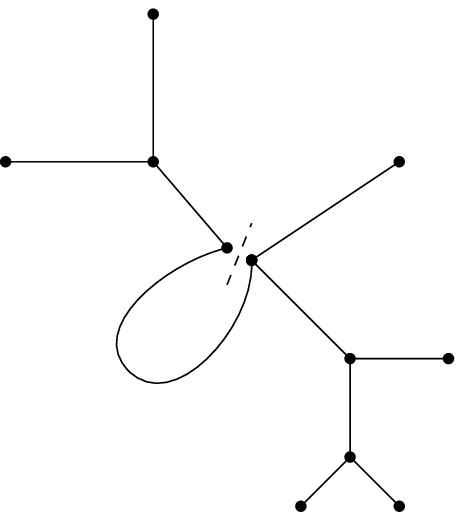}\\ \centering{\textbf{(c)}}
	\end{minipage} \\ \vspace{2em}
	\begin{minipage}{.3\textwidth}
	\includegraphics{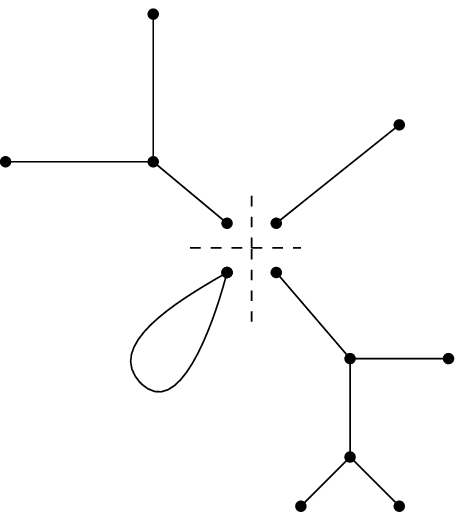}\\ \centering{\textbf{(d)}}
	\end{minipage} \hfill 
	\begin{minipage}{.3\textwidth}
	\includegraphics{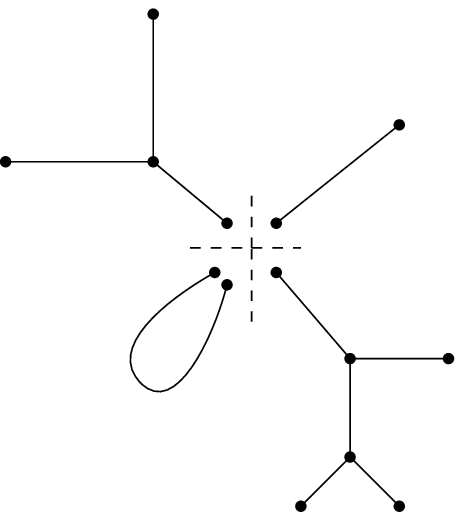}\\ \centering{\textbf{(e)}}
	\end{minipage} \hfill
	\begin{minipage}{.3\textwidth}
	\includegraphics{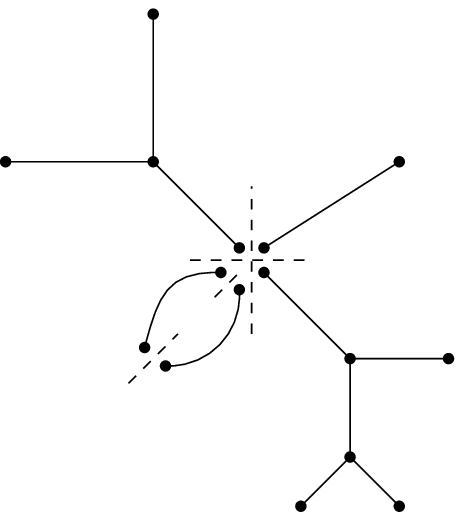}\\ \centering{\textbf{(f)}}
	\end{minipage}
	\caption{Examples of cut graphs of a given graph (a). In (b)--(e) the cut is only at the central vertex $v$: (b) and (c) depict two distinct simple cuts, in (d) the cut has rank three, in (e) it has rank four. Observe that (f), which is a cut of (a) of rank five, is a simple cut of (e), which is a simple cut of (d), which is a cut of neither (b) nor (c).}
	\label{fig:cutgraphsexamples}
\end{figure}

\begin{definition}
Let $\Graph$, $\Graph'$ be metric graphs. Suppose $\Graph'$ is a cut of $\Graph$, such that the graphs have vertex sets $\VertexSet'$ and $\VertexSet$, respectively, then we say
\begin{enumerate} [(i)]
\item $\Graph'$ is a cut of $\Graph$ of \emph{rank}
\begin{displaymath}
\operatorname{rank}(\Graph': \Graph) := |\VertexSet'| - |\VertexSet|.
\end{displaymath} 
\item $\Graph'$ is a \emph{simple cut} if
\begin{displaymath}
\operatorname{rank}(\Graph': \Graph) =1,
\end{displaymath}
i.e. there exists a unique $v\in \VertexSet$ and $v_1', v_2'\in \VertexSet'$ such that $v=v_1' \cup v_2'$ (see also \cite[Definition 2.7(1) with $k=1$]{KeKuLeMu20}).
\end{enumerate}
\end{definition}

\begin{remark}
The rank, thus defined, is invariant under relabeling of the edges and insertion or removal of dummy vertices in $\Graph$ (which by definition of a cut must then also be inserted or removed simultaneously in $\Graph'$), and is hence invariant under isometric isomorphisms of the graph.
\end{remark}

\begin{lemma}
\label{lem:usefultrivialities}
Let $\Graph_1, \Graph_2, \Graph_3$ be metric graphs with common edge set. Suppose $\Graph_1$ is a cut of $\Graph_2$ and $\Graph_2$ is a cut of $\Graph_3$, then 
\begin{enumerate}[(1)]
\item $\Graph_1$ is a cut of $\Graph_3$ and
\begin{displaymath}
\operatorname{rank}(\Graph_1:\Graph_3)= \operatorname{rank}(\Graph_1: \Graph_2) + \operatorname{rank}(\Graph_2:\Graph_3);
\end{displaymath}
\item if  $\operatorname{rank}(\Graph_1:\Graph_3)= \operatorname{rank}(\Graph_2: \Graph_3)$, 
then $\Graph_1=\Graph_2$.
\end{enumerate} 
\end{lemma}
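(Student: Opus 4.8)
The plan is to recast the notion of a cut entirely in terms of the vertex partitions. Since $\Graph_1$, $\Graph_2$, $\Graph_3$ share a common edge set, they also share a common endpoint set $\EndpointSet$, and their vertex sets $\VertexSet_1, \VertexSet_2, \VertexSet_3$ are precisely partitions of $\EndpointSet$ into blocks (vertices). By Definition~\ref{def:cut}(ii), the assertion that $\Graph_i$ is a cut of $\Graph_j$ is exactly the statement that the partition $\VertexSet_i$ \emph{refines} $\VertexSet_j$, i.e.\ every block of $\VertexSet_i$ is contained in a unique block of $\VertexSet_j$. The rank $\operatorname{rank}(\Graph_i:\Graph_j) = |\VertexSet_i| - |\VertexSet_j|$ then simply counts how many more blocks the finer partition has; in particular, since refining can only split blocks, one has $|\VertexSet_i| \ge |\VertexSet_j|$, so the rank is a nonnegative integer. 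Once this dictionary is in place the lemma reduces to two elementary facts about set partitions.

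For part~(1), I would first verify that $\Graph_1$ is a cut of $\Graph_3$ by transitivity of refinement: given $v_1 \in \VertexSet_1$, since $\VertexSet_1$ refines $\VertexSet_2$ there is $v_2 \in \VertexSet_2$ with $v_1 \subseteq v_2$, and since $\VertexSet_2$ refines $\VertexSet_3$ there is $v_3 \in \VertexSet_3$ with $v_2 \subseteq v_3$; hence $v_1 \subseteq v_3$, which is exactly Definition~\ref{def:cut}(ii) for the pair $(\Graph_1,\Graph_3)$ (condition~(i) being immediate from the common edge set). The rank identity is then a one-line telescoping computation, $\operatorname{rank}(\Graph_1:\Graph_3) = |\VertexSet_1| - |\VertexSet_3| = (|\VertexSet_1| - |\VertexSet_2|) + (|\VertexSet_2| - |\VertexSet_3|)$, which is the claimed additivity.

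For part~(2), the hypothesis $\operatorname{rank}(\Graph_1:\Graph_3) = \operatorname{rank}(\Graph_2:\Graph_3)$ combined with part~(1) yields $\operatorname{rank}(\Graph_1:\Graph_2) = 0$, that is $|\VertexSet_1| = |\VertexSet_2|$. Since $\VertexSet_1$ is a refinement of $\VertexSet_2$ with the same number of blocks, no block of $\VertexSet_2$ can be split nontrivially: splitting any block into two or more blocks of $\VertexSet_1$ would strictly raise the total count. Hence each block of $\VertexSet_2$ is a single block of $\VertexSet_1$, so $\VertexSet_1 = \VertexSet_2$ as partitions of $\EndpointSet$, and together with the common edge set this gives $\Graph_1 = \Graph_2$.

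The statement is elementary once cuts are identified with refinements of set partitions, so I do not anticipate any serious obstacle. The only point deserving a moment's care is the combinatorial fact invoked in part~(2) — that a refinement preserving the number of blocks must be the identity — which follows from the observation that the finer partition is obtained from the coarser one by subdividing each block, and any nontrivial subdivision strictly increases the number of blocks.
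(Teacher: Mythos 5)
Your proof is correct and follows essentially the same route as the paper's: part~(1) is treated as an immediate consequence of the definitions (transitivity of refinement plus a telescoping cardinality count), and part~(2) reduces to the observation that a refinement of a partition with the same number of blocks must be the partition itself. If anything, your block-counting argument for~(2) makes explicit the step that the paper compresses into ``possibly after a relabeling, $v_i^{(1)}\subset v_i^{(2)}$,'' but the underlying idea is identical.
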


\begin{proof}
Suppose $\Graph_1, \Graph_2, \Graph_3$ have a common edge set, and vertex sets $\VertexSet_1, \VertexSet_2, \VertexSet_3$ respectively, then (1) follows immediately from the definitions of cut and rank. Now suppose that $\operatorname{rank}(\Graph_1:\Graph_3)=\operatorname{rank}(\Graph_2:\Graph_3)$, then 
$
\operatorname{rank}(\Graph_1:\Graph_2)=0
$
and so $k:=|\VertexSet_1|=|\VertexSet_2|$. Let 
\begin{displaymath}
	\VertexSet_1=\{v_1^{(1)}, \ldots, v_k^{(1)}\}, \qquad
	\VertexSet_2=\{v_1^{(2)}, \ldots, v_k^{(2)}\}.
\end{displaymath}
Since $\Graph_1$ is a cut of $\Graph_2$ we may assume, possibly after a relabeling, that $v_i^{(1)}\subset v_i^{(2)}$ for all $i=1,\ldots, k$. But since there is a bijection between the two vertex sets there must be equality, $ v_i^{(1)}= v_i^{(2)}$ for all $i=1,\ldots, k$.
\end{proof}

\begin{remark}
\label{rem:cutpartialordering}
A graph $\Graph_1$ being a cut of $\Graph_2$ defines a partial ordering on the set of metric graphs. Given a metric graph $\Graph$ (with a \emph{fixed} vertex set, i.e., where we do not permit the insertion or removal of dummy vertices), the set of its cut graphs becomes a partially ordered family, and by Lemma~\ref{lem:usefultrivialities} the rank is additive on this family.
\end{remark}

\begin{lemma}\label{lem:usefultounderstand}
Let $\Graph, \Graph'$ be metric graphs. Then $\Graph'$ is a cut of $\Graph$ of rank $k\in \mathbb N$ if and only if there exists a sequence of cuts of $\Graph$
\begin{displaymath}
	\Graph= \Graph^{(0)}\;,\;  \Graph^{(1)}\;,\; \cdots\;,\;\Graph^{(k-1)}\;,\; \Graph^{(k)}=\Graph' 
\end{displaymath}
such that $\Graph^{(i+1)}$ is a simple cut of $\Graph^{(i)}$ for all $i=0,\ldots, k-1$.
\end{lemma}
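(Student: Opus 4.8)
The plan is to prove the two implications separately. The direction "a chain of simple cuts yields a rank-$k$ cut" is an immediate consequence of Lemma~\ref{lem:usefultrivialities}, so it is the converse that carries the content; I would handle it by induction on the rank, peeling off one simple cut at a time. For the easy direction, suppose we are given a chain $\Graph = \Graph^{(0)}, \ldots, \Graph^{(k)} = \Graph'$ in which each $\Graph^{(i+1)}$ is a simple cut of $\Graph^{(i)}$, i.e.\ $\operatorname{rank}(\Graph^{(i+1)}:\Graph^{(i)}) = 1$. Applying Lemma~\ref{lem:usefultrivialities}(1) repeatedly shows that $\Graph'$ is a cut of $\Graph$ and that the rank is additive along the chain, so $\operatorname{rank}(\Graph':\Graph) = \sum_{i=0}^{k-1} \operatorname{rank}(\Graph^{(i+1)}:\Graph^{(i)}) = k$.

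For the converse, the first observation I would record is structural: since $\Graph$ and $\Graph'$ share an edge set and hence an endpoint set $\EndpointSet$, both $\VertexSet$ and $\VertexSet'$ are partitions of $\EndpointSet$, and condition~(ii) of Definition~\ref{def:cut} says precisely that $\VertexSet'$ refines $\VertexSet$. Thus there is a well-defined surjection $\pi\colon \VertexSet' \to \VertexSet$ sending each $v'$ to the unique $v \supset v'$; writing $m_v := |\pi^{-1}(v)|$ for the number of pieces into which $v$ is split, we have $\sum_{v \in \VertexSet}(m_v - 1) = |\VertexSet'| - |\VertexSet| = k$. In particular, whenever $k \geq 1$ at least one vertex $v$ satisfies $m_v \geq 2$.

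Now I induct on $k$. The base case $k = 1$ holds by definition, as a rank-one cut is exactly a simple cut. For the inductive step with $k \geq 2$, I would choose a vertex $v \in \VertexSet$ with $m_v \geq 2$, pick two of its pieces $v_1', v_2' \in \pi^{-1}(v)$, and let $\Graph''$ be the graph obtained from $\Graph'$ by gluing $v_1'$ and $v_2'$ together, i.e.\ with the same edge set and vertex set $\VertexSet'' := (\VertexSet' \setminus \{v_1', v_2'\}) \cup \{v_1' \cup v_2'\}$. Since $v_1' \cup v_2' \subset v$ and all other vertices are unchanged, $\Graph''$ is again a cut of $\Graph$, of rank $|\VertexSet''| - |\VertexSet| = k-1$; and by construction $\Graph'$ is a cut of $\Graph''$ that splits the single vertex $v_1' \cup v_2'$ into exactly two, hence a simple cut of $\Graph''$. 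Applying the induction hypothesis to $\Graph''$ yields a chain of simple cuts $\Graph = \Graph^{(0)}, \ldots, \Graph^{(k-1)} = \Graph''$, and appending $\Graph^{(k)} := \Graph'$ completes the required chain of length $k$.

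The only genuinely delicate point is the structural observation that a cut is precisely a refinement of the endpoint partition $\EndpointSet$; once this is in place, both the existence of a cut vertex with at least two pieces and the verification that regluing two pieces keeps us within the class of cuts of $\Graph$ are routine. I therefore expect the induction to go through cleanly, with no real obstacle beyond this bookkeeping.
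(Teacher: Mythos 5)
Your proof is correct, and at its core it rests on the same two ingredients as the paper's: additivity of the rank (Lemma~\ref{lem:usefultrivialities}(1)) for the easy direction, and a step-by-step factorization into simple cuts for the converse. The difference lies in the direction in which the factorization is built, and it is not merely cosmetic. The paper works top-down from $\Graph$: it picks a cut vertex $v \in \CutSet(\Graph':\Graph)$ and a vertex $v' \in \VertexSet'$ with $v' \subsetneq v$, splits $v$ into $v'$ and $v \setminus v'$ to obtain a simple cut $\Graph^{(1)}$ of which $\Graph'$ is still a cut, and iterates; since this process produces \emph{some} rank-$k$ cut $\Graph^{(k)}$ rather than manifestly $\Graph'$ itself, the paper must invoke the rigidity statement of Lemma~\ref{lem:usefultrivialities}(2) at the very end to identify $\Graph^{(k)} = \Graph'$. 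You instead work bottom-up from $\Graph'$: you glue two fragments $v_1', v_2'$ of a split vertex to obtain a cut $\Graph''$ of $\Graph$ of rank $k-1$ of which $\Graph'$ is a simple cut, and then apply the induction hypothesis to $\Graph''$. Because your chain terminates at $\Graph'$ by construction, you never need Lemma~\ref{lem:usefultrivialities}(2) -- a small but genuine simplification. Your preliminary observation that condition~(ii) of Definition~\ref{def:cut} makes $\VertexSet'$ a refinement of the partition $\VertexSet$ of $\EndpointSet$, with a well-defined surjection $\pi\colon \VertexSet' \to \VertexSet$ satisfying $\sum_{v}(m_v - 1) = k$, is exactly the bookkeeping needed to guarantee a gluable pair exists and that regluing stays within the class of cuts of $\Graph$; the paper leaves the analogous verifications implicit (``one easily sees''), so your version is, if anything, the more complete write-up.
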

\begin{proof}
Suppose $\Graph'$ is a cut of $\Graph$ of rank $k\in \mathbb N$, where the graphs have common edge set $\EdgeSet$ and vertex sets $\VertexSet'$, $\VertexSet$, respectively. For the ``only if'' statement we give a constructive proof. Let $v\in \CutSet(\Graph':\Graph)$ and $v'\in \VertexSet'$ such that $v'\subsetneq v$, then we define
\begin{displaymath}
	\VertexSet^{(1)}= \big(\VertexSet\setminus\{v\}\big) \cup\{v', v\setminus v'\}.
\end{displaymath}
Then by construction $\Graph^{(1)}=(\VertexSet^{(1)},\EdgeSet)$ is a simple cut of $\Graph$ and $|\VertexSet^{(1)}|=|\VertexSet|+1$ and $\Graph^{(1)}$ is a simple cut of $\Graph$. One easily sees that $\Graph'$ is a cut of $\Graph^{(1)}$ and by Lemma~\ref{lem:usefultrivialities} (1) we have
\begin{displaymath}
	\operatorname{rank}(\Graph': \Graph^{(1)})= k-1.
\end{displaymath} 
We sucessively construct metric graphs $\Graph^{(1)}, \ldots, \Graph^{(k)}$ such that $\Graph^{(i+1)}$ is a simple cut of $\Graph^{(i)}$ and $\operatorname{rank}(\Graph^{(i)}:\Graph)= i$ for all $i=1,\ldots, k$.  Then $\operatorname{rank}(\Graph':\Graph)=\operatorname{rank}(\Graph^{(k)}: \Graph)$ and so by Lemma~\ref{lem:usefultrivialities} (2) we conclude that $\Graph^{(k)}=\Graph'$.
The other direction is a direct consequence of the additivity of the rank in the sense of Lemma~\ref{lem:usefultrivialities}~(1).
\end{proof}

Note that there is no ``global'' maximal cut of a graph in the sense of the partial ordering as described in Remark~\ref{rem:cutpartialordering}. However, we will have repeated need of the idea of the cut of maximal possible rank \emph{at a given set of vertices}, which replaces each vertex $v$ in that set, say of degree $d_v$, with $d_v$ vertices of degree one. We will call this the \emph{maximal cut} of the graph at the set in question; for example, the maximal cut of the graph in Figure~\ref{fig:cutgraphsexamples}(a) at the vertex $v$ is exactly the graph in~(e). Formally, it may be defined as follows.

\begin{definition}
\label{def:totalcut}
Let $\Graph$ be a metric graph and $\VertexSet_0\subset \VertexSet$ a distinguished vertex set. Then we call the graph $\Graph_1$ with common edge set and vertex set
\begin{displaymath}
	\VertexSet_1 := \big(\VertexSet\setminus\VertexSet_0\big) \cup \bigcup_{v\in \VertexSet_0} \bigcup_{x\in v} \{x\}
\end{displaymath}
the \emph{maximal cut (graph)} of $\Graph$ at $\VertexSet_0$.
\end{definition}

We finish this subsection with another elementary but useful property relating cuts to the first Betti number $\beta = |\EdgeSet| - |\VertexSet| + 1$ of $\Graph = (\VertexSet,\EdgeSet)$. Since here we explicitly allow cuts at the vertices, as is not always the case, we include the short proof.
\begin{lemma}
\label{lem:betacut}
Suppose $\Graph$ has first Betti number $\beta \geq 0$ and $\Graph'$ is a cut of $\Graph$ such that $\operatorname{rank}(\Graph': \Graph) \geq \beta + 1$. Then $\Graph'$ is disconnected.
\end{lemma}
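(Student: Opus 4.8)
The plan is to track how the first Betti number changes under a single simple cut, and then iterate using Lemma~\ref{lem:usefultounderstand}. Recall that for a metric graph $\Graph = (\VertexSet,\EdgeSet)$ the first Betti number is $\beta = |\EdgeSet| - |\VertexSet| + c$, where $c$ is the number of connected components. (The formula $\beta = |\EdgeSet| - |\VertexSet| + 1$ quoted just before the statement is the connected case $c=1$.) A simple cut leaves the edge set $\EdgeSet$ unchanged and increases $|\VertexSet|$ by exactly one. Hence, for a simple cut $\Graph^{(i+1)}$ of $\Graph^{(i)}$, the quantity $|\EdgeSet| - |\VertexSet|$ drops by one, so the combination $\beta - c$ drops by one as well. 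The key dichotomy is then: a single simple cut either leaves the number of components unchanged, in which case $\beta$ drops by one, or it increases the number of components by one (disconnecting a component into two), in which case $\beta$ stays the same.

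First I would make this dichotomy precise. Writing $\beta^{(i)}$ and $c^{(i)}$ for the Betti number and component count of $\Graph^{(i)}$, the identity $|\EdgeSet| - |\VertexSet^{(i)}| = \beta^{(i)} - c^{(i)}$ combined with $|\VertexSet^{(i+1)}| = |\VertexSet^{(i)}| + 1$ gives
\begin{displaymath}
\beta^{(i+1)} - c^{(i+1)} = \beta^{(i)} - c^{(i)} - 1.
\end{displaymath}
Since a simple cut can only split one component into at most two (it separates a single vertex into two vertices, and connectivity elsewhere is untouched), we have $c^{(i+1)} \in \{c^{(i)}, c^{(i)}+1\}$, and correspondingly $\beta^{(i+1)} = \beta^{(i)} - 1$ if the cut does not disconnect, or $\beta^{(i+1)} = \beta^{(i)}$ if it does. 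In particular $\beta$ is nonincreasing along the sequence of simple cuts and can decrease by at most one at each step.

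Next I would run the iteration. By Lemma~\ref{lem:usefultounderstand}, since $\Graph'$ is a cut of $\Graph$ of rank $k := \operatorname{rank}(\Graph':\Graph) \geq \beta + 1$, there is a chain of simple cuts
\begin{displaymath}
\Graph = \Graph^{(0)},\ \Graph^{(1)},\ \ldots,\ \Graph^{(k)} = \Graph'.
\end{displaymath}
Suppose toward a contradiction that $\Graph'$ is connected, i.e. $c^{(k)} = 1$. Starting from $c^{(0)} = 1$ (as $\Graph$ is connected) and $\beta^{(0)} = \beta$, the nonnegativity of the Betti number of every $\Graph^{(i)}$ forces the decrease of $\beta$ to stop once it reaches $0$; since $\beta$ decreases by at most one per step, after the first $\beta$ steps at most we have $\beta^{(i)} = 0$ for all $i \geq \beta$. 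For every step $i \geq \beta$ with $\beta^{(i)} = \beta^{(i+1)} = 0$, the dichotomy forces $c^{(i+1)} = c^{(i)} + 1$, so the component count strictly increases on each of the last $k - \beta \geq 1$ steps. Therefore $c^{(k)} \geq 1 + (k - \beta) \geq 2$, contradicting $c^{(k)} = 1$.

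The main thing to verify carefully, and the only genuine subtlety, is the claim that a simple cut increases the number of connected components by at most one. This is where the hypothesis that we cut a single vertex (rank one) is used essentially: splitting one vertex $v$ into $v_1'$ and $v_2'$ can at worst separate the edges incident to $v$ into two groups and thereby break one component into two, but it cannot affect connectivity among edges not touching $v$. I would phrase this as: in $\Graph^{(i+1)}$, any two points lying in the same component of $\Graph^{(i)}$ remain connected unless every path between them passed through the split vertex, and such paths can fail in only one ``coherent'' way, producing a single new component. I would also double-check the bookkeeping in the degenerate case $\beta = 0$, where the hypothesis reads $k \geq 1$ and the very first cut already disconnects the (tree) graph, which the argument above recovers directly.
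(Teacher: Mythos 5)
Your setup is sound: the generalized Euler formula $\beta = |\EdgeSet| - |\VertexSet| + c$, the observation that a simple cut drops $\beta - c$ by exactly one, and the resulting dichotomy (either $\beta$ drops by one with $c$ fixed, or $c$ rises by one with $\beta$ fixed) are all correct, and they do suffice to prove the lemma. The gap is in the iteration. You assert that ``after the first $\beta$ steps at most we have $\beta^{(i)} = 0$ for all $i \geq \beta$,'' justified only by nonnegativity of $\beta^{(i)}$ and the fact that it drops by at most one per step. That inference is a non sequitur: those two facts yield the \emph{lower} bound $\beta^{(i)} \geq \max\{\beta - i, 0\}$, not the upper bound you need, and nothing forces the $\beta$-decreasing steps to occur first. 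Concretely, let $\Graph$ be a figure-eight ($\beta = 2$, one vertex, two loop edges) and let the first simple cut split the central vertex so as to separate the two loops: then $c^{(1)} = 2$ and $\beta^{(1)} = 2$; a second cut opening one loop gives $\beta^{(2)} = 1 \neq 0$ although $2 \geq \beta$. Consequently your next step, that ``the component count strictly increases on each of the last $k - \beta$ steps,'' is unjustified as written (and false in this example: the third cut, opening the remaining loop, leaves $c$ unchanged).

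The repair is immediate from your own dichotomy, and no contradiction argument is needed: since $\beta^{(i)}$ is non-increasing, starts at $\beta$, stays nonnegative, and drops by exactly one at each non-disconnecting step, at most $\beta$ of the $k$ steps can be non-disconnecting; hence at least $k - \beta \geq 1$ steps increase $c$ by one, so $c^{(k)} \geq 1 + (k - \beta) \geq 2$ no matter in which order the two kinds of steps occur. (Equivalently, your bookkeeping gives the identity $c^{(k)} = k + 1 - \beta + \beta^{(k)} \geq k + 1 - \beta \geq 2$.) If you prefer to keep the contradiction, you must add the monotonicity fact that a cut of a disconnected graph is disconnected, so that $c^{(k)} = 1$ forces $c^{(i)} = 1$ for all $i$; then every step decreases $\beta$ and $\beta^{(k)} = \beta - k < 0$, which is absurd. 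For comparison, the paper's proof avoids this bookkeeping entirely: it uses Lemma~\ref{lem:usefultounderstand} to produce an intermediate cut $\Graph_0$ of rank exactly $\beta$, and argues that either $\Graph_0$ is already disconnected (hence so is $\Graph'$, by the same monotonicity), or it is connected with Betti number $\beta - \beta = 0$, hence a tree, and any further cut of a tree disconnects it. Your Euler-formula approach is a genuinely different and somewhat more quantitative route, but as written it needs the counting fix above.
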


\begin{proof}
By Lemma~\ref{lem:usefultounderstand} there exists an intermediate cut $\Graph_0 = (\VertexSet_0,\EdgeSet)$ of $\Graph$ of rank $\beta$, such that $\Graph'$ is a cut of $\Graph_0$ of rank $\geq 1$. Suppose $\Graph_0$ is not already disconnected (if it is, then so is $\Graph'$). Then $\Graph_0$ has first Betti number $|\EdgeSet| - |\VertexSet_0| + 1 = (|\EdgeSet| - |\VertexSet| + 1) - (|\VertexSet_0| - |\VertexSet|) = \beta-\beta = 0$. So $\Graph_0$ is necessarily a tree, and any further cut will disconnect it. Hence $\Graph'$ cannot be connected.
\end{proof}

\subsection{Partitions}
\label{sec:partitions}

We can now introduce the central objects of study here. The next definition follows \cite[Section~2]{KeKuLeMu20}.

\begin{definition}
Let $k\ge 1$ and let $\Graph=(\VertexSet, \EdgeSet)$ be a metric graph. Then:
\begin{enumerate}[(i)]
\item $\Partition:=(\Graph_1, \ldots, \Graph_k)$ is a \emph{$k$-partition} of $\Graph$ if there exists a cut $\Graph'$ such that $\Graph_1, \ldots, \Graph_k$ are connected components of $\Graph'$.
We refer to the components $\Graph_1,\ldots,\Graph_k$ as \emph{clusters};
\item $\Partition=(\Graph_1, \ldots, \Graph_k)$ is an \emph{exhaustive $k$-partition} if $\Graph'= \sqcup_{i=1}^k \Graph_i$ is a cut graph of $\Graph$ and $\Graph_1, \ldots, \Graph_k$ are its connected components.
\end{enumerate}
We denote the class of all $k$-partitions of $\Graph$ by $\mathfrak C_k(\Graph)$.
\end{definition}

What we are calling $k$-partitions were called \emph{connected} $k$-partitions in \cite{KeKuLeMu20}; this was the broadest class considered in that paper. Note that a $k$-partition need not be exhaustive: the cut $\Graph'$ may have other connected components besides $\Graph_1,\ldots,\Graph_k$, which are not included in $\Partition$. As such, in principle there could be multiple cuts of $\Graph$ which generate $\Partition$ if $\Partition$ is not exhaustive, since one may cut anywhere outside $\sqcup_{i=1}^k \Graph_i$ and still obtain $\Partition$.

For example, if we consider the $3$-partition $\Partition = (\Graph_1,\Graph_2,\Graph_3)$ depicted in Figure~\ref{fig:nonexhaustivepartitionexample} (right) of the graph $\Graph$ of Figure~\ref{fig:cutgraphsexamples}(a) which excludes the loop, then there are multiple possible ways to cut $\Graph$ to extract these three clusters: the graphs of Figure~\ref{fig:cutgraphsexamples}(d), (e) and (f) are all examples.

\begin{figure}[ht]
	\begin{minipage}{.35\textwidth}\includegraphics{examplegraph.eps} \end{minipage}\hspace{3em}
	\begin{minipage}{.35\textwidth}\includegraphics{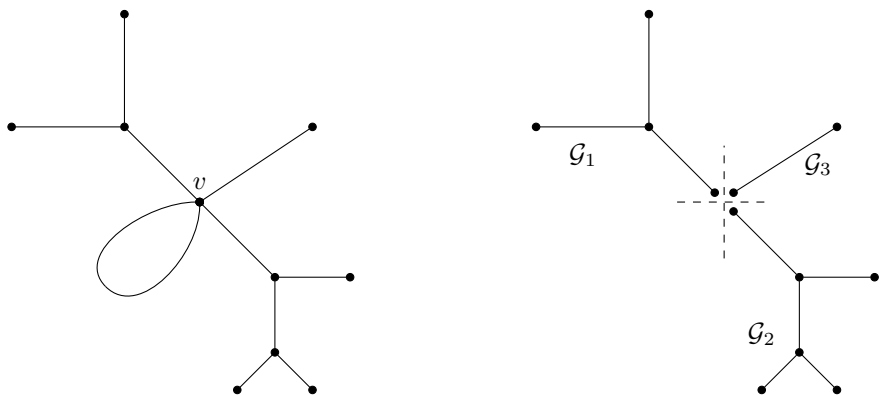} \end{minipage}
	\caption{An example of a non-exhaustive partition (right) of the graph in Figure~\ref{fig:cutgraphsexamples}(a) (reproduced here, left).}
	\label{fig:nonexhaustivepartitionexample}
\end{figure}

However, there will always be a cut of \emph{minimal rank} which gives rise to $\Partition$, as we simply avoid cutting through parts of the graph which are being excluded from the partition whenever possible; we will call this cut graph the \emph{minimal cut} associated with $\Partition$. In our example it is given by (d). In this sense there is no ``maximal cut'' \emph{associated with $\Partition$;} in particular, such a minimal cut is not the direct opposite of the maximal cut \emph{at a vertex set}.

\begin{definition}
\label{def:canonicalcutgraph}
Let $k\ge 1$, $\Graph$ be a metric graph and $\Partition=(\Graph_1, \ldots, \Graph_k)$ be a $k$-partition of $\Graph=(\VertexSet,\EdgeSet)$. Suppose $\Graph_1=(\VertexSet_1,\EdgeSet_1), \ldots, \Graph_k=(\VertexSet_k,\EdgeSet_k)$ with disjoint subsets $\EdgeSet_1, \ldots, \EdgeSet_k$ and $\VertexSet_1, \ldots, \VertexSet_k$ of $\EdgeSet$ and $\VertexSet$ respectively. Then we define the \emph{minimal cut (graph)} $\Graph_{\Partition}$ of $\Graph$ associated with the partition $\Partition$ as the unique cut graph of $\Graph$ of minimal rank such that $\Graph_1, \ldots, \Graph_k$ are connected components of the cut graph. We will refer to the quantity $\operatorname{rank}(\Graph_\Partition:\Graph)$ as the \emph{rank of the partition $\Partition$}.
\end{definition}

To summarize, given any cut there are, in general, multiple partitions which arise from it (any subset of its connected components will be a partition); on the other hand, given a partition, there will, in general, be multiple possible cuts that produce it. Only in the case of exhaustive partitions is there a bijective correspondence between cuts and partitions. Note in Figure~\ref{fig:cutgraphsexamples} that if we consider $4$-partitions of $\Graph$, then (d) and (e) correspond to two distinct partitions.

\begin{remark}
Let $\Graph$ be a metric graph and let $\Partition$ be a $k$-partition, $k \geq 1$. Then, keeping the notation of Definition~\ref{def:canonicalcutgraph}, the minimal cut graph $\Graph_\Partition$ can be constructed as the unique cut graph with the following properties:
\begin{enumerate}[(i)]
\item for any cut vertex $v \in \CutSet(\Graph_\Partition : \Graph)$ there exists at least one cluster $\Graph_i = (\VertexSet_i,\EdgeSet_i)$ and $v_i \in \VertexSet_i$ such that $v_i \subset v$;
\item if a cut vertex $v \in \CutSet(\Graph_\Partition : \Graph)$ is not divided among vertices of the $\Graph_i$, that is, if $w := v \setminus \bigcup_{\tilde{v} \in \bigcup_{i=1}^k \VertexSet_i}$ is non-empty, then there is exactly one connected component of $\Graph_\Partition$ such that $w$ is a vertex of that connected component.
\end{enumerate}
Hence the minimal cut graph $\Graph_{\Partition}$ may be described formally as the metric graph with the same edge set as $\Graph$ and vertex set
\begin{displaymath}
	\VertexSet_\Partition = \bigcup_{i=1}^k \VertexSet_i \cup 
	\left\{ v \setminus \bigcup_{\tilde{v} \in \bigcup_{i=1}^k \VertexSet_i} \tilde v : v \in \VertexSet \setminus \bigcup_{i=1}^k\VertexSet_i \right\}.
\end{displaymath}
\end{remark}

We next introduce the following notation for the boundary points of a partition.

\begin{definition}
\label{def:cutandboundary}
Let $\Partition = (\Graph_1, \ldots, \Graph_k)$ be a (not necessarily exhaustive) $k$-partition of $\Graph$, $k\geq 1$, and let $\Graph_\Partition$ be the minimal cut graph of $\Graph$ associated with $\Partition$.
\begin{enumerate}[(i)]
\item We say $\Graph_i=(\VertexSet_i, \EdgeSet_i)$ and $\Graph_j=(\VertexSet_j, \EdgeSet_j)$, $i\neq j$, are \emph{neighboring clusters}, or just \emph{neighbors}, if there exist $v_i\in \VertexSet_i$, $v_j\in \VertexSet_j$ and $v\in \VertexSet$ such that $v_i, v_j\subset v$. We call any such $v\in \VertexSet$ a \emph{boundary vertex} (or \emph{boundary point}) of $\Partition$, and define the \emph{boundary set of $\Partition$} to be the set of all such boundary points:
\begin{displaymath}
\partial \parti := \{v\in \VertexSet : \exists v_i \in \VertexSet_i, v_j\in \VertexSet_j, i\neq j: v_i, v_j \subset v\}.
\end{displaymath}
\item We define the \emph{boundary set of the cluster $\Graph_i = (\VertexSet_i,\EdgeSet_i)$} by
\begin{displaymath}
\partial \Graph_i =\{ v\in \VertexSet_i : \exists v'\in \CutSet(\Graph_\Partition: \Graph): v\subsetneq v'\}
\end{displaymath}
(cf.\ Definition~\ref{def:cut}).
\end{enumerate}
\end{definition}

For the partition $\Partition = (\Graph_1,\Graph_2,\Graph_3)$ from Figure~\ref{fig:nonexhaustivepartitionexample}, all three clusters are neighbors of each other; $v \in \VertexSet$ is the unique boundary vertex of $\Partition$, and the boundary sets of the $\Graph_i$ consist, respectively, of the three degree-one vertices obtained after cutting $\Graph$ at $v$.

Sometimes it is convenient to be able to consider only exhaustive partitions, especially when dealing with properties of neighboring clusters. To this end, and to complete the picture as regards cuts versus partitions, we introduce the following notion.

\begin{definition}
\label{def:exhaustive-extension}
Let $\Graph$ be a metric graph and let $\Partition=\{\Graph_1, \ldots, \Graph_k\}$ be a (non-exhaustive) $k$-partition of $\Graph$, $k\geq 1$, with edge sets $\EdgeSet_1, \ldots, \EdgeSet_k\subset \EdgeSet$. We say that $\Partition'=\{\Graph_1', \ldots, \Graph_k'\}$ is an \emph{exhaustive extension} of $\Partition$ if
\begin{enumerate}[(i)]
\item $\Graph_{\Partition}$ is a cut of $\Graph_{\Partition'}$
\item $\EdgeSet_i \subset \EdgeSet_i'$ for all $i=1, \ldots, k$
\item $\bigcup_{i=1}^k \EdgeSet_i'= \EdgeSet$.
\end{enumerate}
\end{definition}

The exhaustive $4$-partitions corresponding to the cuts in Figure~\ref{fig:cutgraphsexamples}(d) and (e), and the exhaustive $5$-partition in (f), are all exhaustive extensions of the partition $\Partition = (\Graph_1,\Graph_2,\Graph_3)$ depicted in Figure~\ref{fig:nonexhaustivepartitionexample}.

We finish this section with a useful estimate.

\begin{lemma}\label{lem:usefulestimate}
Let $\Graph$ be a metric graph with first Betti number $\beta \geq 0$. 
Suppose $\Partition=(\Graph_1, \ldots, \Graph_k)$ is an exhaustive $k$-partition of $\Graph$, $k\geq 1$. Then
\begin{equation}
\label{eq:rankestimate}
k-1 \le \operatorname{rank}(\Graph_\Partition: \Graph) \le k-1+\beta .
\end{equation}
\end{lemma}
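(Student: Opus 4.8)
The plan is to reduce everything to the Euler-type identity relating the first Betti number, the number of vertices, the number of edges and the number of connected components of a graph, and then to read off both bounds simultaneously from a single exact formula for the rank.

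First I would record that, since $\Partition$ is exhaustive, the minimal cut graph is exactly $\Graph_\Partition = \bigsqcup_{i=1}^k \Graph_i$; it shares the edge set $\EdgeSet$ with $\Graph$ and has precisely $k$ connected components. Writing $\beta(\Graph_\Partition) := \sum_{i=1}^k \beta(\Graph_i)$ for its first Betti number, the general Euler relation (first Betti number equals edges minus vertices plus number of components) gives $|\VertexSet_\Partition| = |\EdgeSet| + k - \beta(\Graph_\Partition)$, while connectedness of $\Graph$ gives $|\VertexSet| = |\EdgeSet| + 1 - \beta$. Subtracting yields the exact formula
\[
\operatorname{rank}(\Graph_\Partition : \Graph) = |\VertexSet_\Partition| - |\VertexSet| = k - 1 + \beta - \beta(\Graph_\Partition).
\]
Thus the desired estimate \eqref{eq:rankestimate} is equivalent to the two-sided bound $0 \le \beta(\Graph_\Partition) \le \beta$.

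The inequality $\beta(\Graph_\Partition) \ge 0$ is immediate, as each cluster has nonnegative first Betti number. For $\beta(\Graph_\Partition) \le \beta$ — which I expect to be the only genuine point of the proof — I would invoke Lemma~\ref{lem:usefultounderstand} to write $\Graph_\Partition$ as the end of a chain of simple cuts $\Graph = \Graph^{(0)}, \Graph^{(1)}, \ldots, \Graph^{(r)} = \Graph_\Partition$, with $r = \operatorname{rank}(\Graph_\Partition:\Graph)$. Each simple cut leaves the edge set unchanged, raises the vertex count by exactly one, and changes the number of connected components by either $0$ (the split keeps that piece connected) or $1$ (the split disconnects it); by the Euler relation the first Betti number therefore either drops by one or stays the same, so it is nonincreasing along the chain. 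Hence $\beta(\Graph_\Partition) \le \beta(\Graph) = \beta$, which closes the argument.

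Finally, I would remark that the lower bound admits an even more elementary justification that avoids Betti numbers altogether: along the same chain of simple cuts the number of connected components increases by at most one at each step, so passing from the connected graph $\Graph$ (one component) to the $k$ components of $\Graph_\Partition$ forces $r \ge k-1$, that is, $\operatorname{rank}(\Graph_\Partition:\Graph) \ge k-1$. The main obstacle is thus not the lower bound but the monotonicity of the first Betti number under cutting used for the upper bound; once that is in hand, everything else is bookkeeping with the Euler relation.
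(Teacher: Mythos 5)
Your proof is correct, but it follows a genuinely different route from the paper's. The paper argues by surgery within its cut/gluing formalism: it glues neighboring clusters one at a time, producing a chain of partitions that terminates in a connected intermediate cut $\Graph^{(k-1)}$ of $\Graph$ with $\operatorname{rank}(\Graph_\Partition:\Graph^{(k-1)})=k-1$; additivity of the rank (Lemma~\ref{lem:usefultrivialities}) then gives the lower bound, and the upper bound follows because a connected cut of $\Graph$ has rank at most $\beta$ (Lemma~\ref{lem:betacut}). You instead do Euler-characteristic bookkeeping: since $\Graph_\Partition=\sqcup_{i=1}^k\Graph_i$ for an exhaustive partition, you get the exact identity $\operatorname{rank}(\Graph_\Partition:\Graph)=k-1+\beta-\beta(\Graph_\Partition)$, which reduces the lemma to $0\le\beta(\Graph_\Partition)\le\beta$; nonnegativity is trivial, and monotonicity of the Betti number under simple cuts follows from the same Euler relation applied along the chain provided by Lemma~\ref{lem:usefultounderstand}. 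Your approach buys strictly more information: the exact formula shows that equality holds in the lower bound precisely when the clusters retain all $\beta$ independent cycles ($\beta(\Graph_\Partition)=\beta$), and in the upper bound precisely when every cluster is a tree ($\beta(\Graph_\Partition)=0$), and it bypasses Lemma~\ref{lem:betacut} entirely; the paper's argument, by contrast, stays inside the cut calculus it has already set up and reuses Lemma~\ref{lem:betacut}, which it needs elsewhere anyway. One small slip in your closing remark: in your own formula the monotonicity $\beta(\Graph_\Partition)\le\beta$ is what delivers the \emph{lower} bound $\operatorname{rank}(\Graph_\Partition:\Graph)\ge k-1$ (equivalently, your component-counting argument), while the \emph{upper} bound comes from the trivial inequality $\beta(\Graph_\Partition)\ge 0$; you have these two attributions swapped. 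This does not affect the validity of the proof, since both inequalities on $\beta(\Graph_\Partition)$ are established, but the commentary about which bound is "the genuine point" should be inverted.
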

\begin{proof}
Assume without loss of generality that $\Graph_1=(\VertexSet_1, \EdgeSet_1)$ and $\Graph_2=(\VertexSet_2, \EdgeSet_2)$ are neighboring clusters. Let $v_1 \in \VertexSet_1$, $v_2 \in \VertexSet_2$ and $v\in \VertexSet$ such that $v_1,  v_2\subset v$. We glue $\Graph_1$ and $\Graph_2$ at $v$, i.e. we obtain a graph $\Graph^{(1)}$ with edge set $\EdgeSet^{(1)} =\EdgeSet_1 \cup \EdgeSet_2$ and vertex set  $\VertexSet^{(1)}= \VertexSet_1 \cup \VertexSet_2 \cup \{v_1 \cup v_2\} \setminus \{v_1, v_2\}$. By construction $\Partition^{(1)}=\{\Graph^{(1)}, \Graph_3, \ldots, \Graph_k\}$ defines a $k-1$-partition and  $\Graph_\Partition$ is a simple cut of $\Graph_{\Partition^{(1)}}$. Applying this procedure iteratively and invoking Lemma~\ref{lem:usefultounderstand}, we end up with an exhaustive $1$-partition $\Graph^{(k-1)}$ such that $\Graph_{\Partition}$ is a cut of $\Graph^{(k-1)}$, $\Graph^{(k-1)}$ is a cut of $\Graph$, and $\operatorname{rank}(\Graph_\Partition: \Graph^{(k-1)})=k-1$. Lemma~\ref{lem:usefultrivialities} now yields the lower bound in \eqref{eq:rankestimate}.

On the other hand, since $\Graph$ admits $\beta$ independent cycles any cut of rank $\beta+1$ would necessarily disconnect $\Graph$ (see Lemma~\ref{lem:betacut}); since $\Graph^{(k-1)}$ is a connected cut graph of $\Graph$ we thus have $\operatorname{rank}(\Graph^{(k-1)}:\Graph)\le \beta$. Lemma~\ref{lem:usefultrivialities} now yields the upper bound in \eqref{eq:rankestimate}.
\end{proof}

\section{Function spaces and spectral functionals}
\label{sec:func-spec}

Given a metric graph $\Graph=(\VertexSet, \EdgeSet)$ the spaces $C(\Graph), L^2(\Graph)$ of continuous and square integrable functions, and the Sobolev space $H^1(\Graph)$, respectively, may be defined in the usual way, cf.\ \cite{Mug19}:
\begin{displaymath}
\begin{gathered}
	C(\Graph)=\left \{f\in \bigoplus_{e\in \EdgeSet} C(e) : f(x)= f(y) \text{ for all } x,y\in v, \text{ for all } v\in \VertexSet \right \},\\
	L^2(\Graph) =\bigoplus_{e\in \EdgeSet} L^2(e), \qquad H^1(\Graph) = C(\Graph) \cap \bigoplus_{e\in \EdgeSet} H^1(e).
\end{gathered}
\end{displaymath}
We recall that we treat each vertex $v$ as a point in $\Graph$; in a slight but common abuse of notation we write $f(v)$ for the common value that $f \in C(\Graph)$ takes at all endpoints $x \in v$, and, accordingly, regard $f$ as a function on $\Graph$. Given a distinguished set of vertices $\VertexSet^D$ (which may include dummy vertices) we also define 
\begin{displaymath}
	H_0^1(\Graph, \VertexSet^D) =\{f\in H^1(\Graph): f(v)=0 \text{ for all } v\in \VertexSet^D \}.
\end{displaymath}
If $\Graph'$ is a cluster of a partition of $\Graph$ and $f \in H^1_0 (\Graph',\partial\Graph')$, then we will identify $f$ with a function in $H^1(\Graph)$ in the canonical way, by extension to zero outside $\Graph'$. We also record the following notions related to the zero set of an $H^1$-function for later use.

\begin{definition}
Given $f\in H^1(\Graph)$,
\begin{enumerate}
\item[(i)] its nodal set is defined to be
\begin{displaymath}
	N(f)=\{x\in \Graph: f(x)=0\},
\end{displaymath}
and we call $x \in N(f)$ a \emph{nodal point} (of $f$);
\item[(ii)] we call each connected component of $\Graph \setminus N(f)$ a \emph{nodal domain} of $f$;
\item[(iii)] we say a nodal point $x\in N(f)$ is \emph{non-degenerate} if $f\not \equiv 0$ in any neighborhood of $x$.
\end{enumerate}
\end{definition}

Our notion of \emph{non-degenerate} nodal point is relatively weak: we do not require $f$ to change sign in a neighborhood of $x$, the purpose of the definition is merely to exclude any regions of the graph where $f$ vanishes identically. However, whenever $f$ is a Laplacian eigenfunction (in practice the only case of interest; see below) it is easy to see, by invoking either the maximum principle or the fact that its restriction to each edge is a trigonometric function, that it does in fact change sign in any neighborhood of any non-degenerate nodal point. More than that, in this case its set of non-degenerate nodal points is finite; thus we may assume without loss of generality that they are vertices.
 
A \emph{nodal partition} is the non-exhaustive partition of $\Graph$ whose clusters are the (closures of the) nodal domains of $f$. More precisely, its clusters are those connected components on which $f$ does not vanish identically, of the maximal cut graph of $\Graph$ at all non-degenerate nodal points.

We will be interested in the smallest nontivial eigenvalue of the Laplacian subject to Dirichlet conditions on $\VertexSet^D$ and standard, a.k.a.\ natural (continuity plus Kirchhoff) conditions at all other vertices. If $\VertexSet^D =\emptyset$ this eigenvalue may be described variationally by
\begin{equation}
\label{eq:mu2}
	\mu_2(\Graph) = \inf\left \{ \frac{\int_{\Graph} |f'(x)|^2\, \mathrm dx}{\int_{\Graph} |f(x)|^2\, \mathrm dx} :
	\; f\in H^1(\Graph)\setminus \{0\} \text{ such that } \int_{\Graph} f\, \mathrm dx =0\right \} ;
\end{equation}
we will also, very loosely, refer to this problem as the ``Neumann case'', or $N$ case.\footnote{Note, however, that \emph{Neumann} vertex conditions (not to be confused with Neumann--Kirchhoff) are different from standard ones; the former do not require continuity at the vertex, while at the vertex \emph{every} derivative on every edge needs to be zero.} If at least one vertex is equipped with Dirichlet conditions, then the smallest nontrivial eigenvalue is
\begin{equation}
\label{eq:lambda1}
	\lambda_1(\Graph,\VertexSet^D) =\inf \left \{ \frac{\int_{\Graph} |f'(x)|^2\, \mathrm dx}{\int_{\Graph} |f(x)|^2\, \mathrm dx} :
	\; f\in H^1_0(\Graph, \VertexSet^D)\setminus \{0\}\right \}
\end{equation}
and we talk of the Dirichlet case. Here, if the Dirichlet vertex set $\VertexSet^D$ is clear we will just write $\lambda_1 (\Graph)$. In both cases equality is attained exactly at the corresponding eigenfunctions. As usual we will call the ratio appearing in \eqref{eq:mu2} and \eqref{eq:lambda1} the Rayleigh quotient of $f$.

As in \cite{KeKuLeMu20}, given a $k$-partition $\Partition=(\Graph_1, \ldots, \Graph_k)$ of $\Graph$ we consider the spectral energies
\begin{displaymath}
\begin{gathered}
	\nenergy[k](\Partition):= \max\{ \mu_2 (\Graph_1), \ldots, \mu_2 (\Graph_k)\}, \\
	\denergy[k](\Partition):= \max\{\lambda_1 (\Graph_1), \ldots, \lambda_1 (\Graph_k)\},
\end{gathered}
\end{displaymath}
where unless explicitly stated otherwise we will \emph{always} take the Dirichlet eigenvalue $\lambda_1 (\Graph_i)$ with zero set $\VertexSet_D = \partial\Graph_i$ being the boundary set of the cluster. The subscript $k$ will always denote the number of clusters of the partition; if for a given partition this number is unknown, then we will omit it and simply write $\nenergy[](\Partition)$, $\denergy[](\Partition)$.\footnote{As mentioned in the introduction, here and below our notation is slightly different from that in \cite{KeKuLeMu20}, as here the subscript gives the number of clusters $k$ rather than the value of $p$ in the $p$-norm of the combination of eigenvalues. The reason is that here it will be important to keep track of the number of clusters of our partitions, whereas we always consider functionals built via the $\infty$-norm of the vector of eigenvalues. We are also suppressing a superscript; see Appendix~\ref{appendix:classes}.}

The $k$-partitions minimizing the energies $\nenergy[k]$, $\denergy[k]$ among all (suitable) $k$-partitions are called \emph{spectral minimal $k$-partitions}; the concrete minimization problems we will consider here are to find
\begin{displaymath}
	\noptenergylax[k](\Graph) = \inf_{\Partition \in \mathfrak C_k} \nenergy[k](\Partition), \qquad {\doptenergy[k]}(\Graph) = \inf_{\Partition \in \mathfrak C_k} \denergy[k](\Partition).
\end{displaymath}
In both cases the results of \cite[Section~4]{KeKuLeMu20} guarantee the existence of $k$-partitions attaining these respective infima among all \emph{exhaustive} $k$-partitions. Here we note that it makes no difference for these two minimization problems whether one restricts to exhaustive partitions or not, as will follow from the next lemma (in this context see Definition~\ref{def:exhaustive-extension}).

\begin{lemma}
\label{lem:Partitionexh}
Let $\Graph$ be a metric graph and let $\Partition \in \mathfrak C_k (\Graph)$ be a $k$-partition of $\Graph$, $k\ge 1$. Then there exists an exhaustive extension $\Partition'\in \mathfrak C_k(\Graph)$ such that
\begin{displaymath}
	\nenergy[k](\Partition) \ge \nenergy[k](\Partition') \qquad \text{and} \qquad
	\denergy[k](\Partition) \ge \denergy[k](\Partition').
\end{displaymath}
\end{lemma}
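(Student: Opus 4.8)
The plan is to build $\Partition'$ from $\Partition$ by absorbing the ``leftover'' edges of $\Graph$ (those lying in no cluster of $\Partition$) into the existing clusters one edge at a time, each absorption being realized by gluing a single edge onto a cluster at one of its boundary vertices. Since $\Graph$ is connected, whenever leftover edges remain there is a vertex incident both to a cluster edge and to a leftover edge; gluing that leftover edge to the cluster there keeps the cluster connected, keeps the number of clusters equal to $k$ (we never glue two distinct clusters together), and amounts to a single simple gluing in the sense of Lemma~\ref{lem:usefultounderstand}, so the rank drops by one at each step. Iterating until no leftover edge remains produces an exhaustive $k$-partition $\Partition'$ with $\EdgeSet_i\subset\EdgeSet_i'$ and $\Graph_\Partition$ a cut of $\Graph_{\Partition'}$, i.e.\ an exhaustive extension. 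It therefore suffices to show that one such attachment does not increase either $\mu_2$ or $\lambda_1$ of the cluster involved, since then neither $\nenergy[k]$ nor $\denergy[k]$ (each a maximum over clusters) can increase along the construction.

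The core is the following monotonicity statement: if $\Graph_i'=\Graph_i\cup\mathcal R$ where the connected graph $\mathcal R$ meets $\Graph_i$ in the single vertex $v$, and $v\in\partial\Graph_i$, then $\lambda_1(\Graph_i')\le\lambda_1(\Graph_i)$ and $\mu_2(\Graph_i')\le\mu_2(\Graph_i)$. For the Dirichlet quantity this is the usual domain monotonicity: a minimizer $g$ for $\lambda_1(\Graph_i)$ in \eqref{eq:lambda1} vanishes on $\partial\Graph_i$, in particular at $v$, so its extension by zero to $\mathcal R$ lies in $H^1_0(\Graph_i',\partial\Graph_i')$ with an unchanged Rayleigh quotient, giving $\lambda_1(\Graph_i')\le\lambda_1(\Graph_i)$.

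The Neumann quantity is the delicate point, since domain monotonicity fails for $\mu_2$ and, as a minimizer $g$ for $\mu_2(\Graph_i)$ in \eqref{eq:mu2} need not vanish at $v$, extension by zero is no longer admissible. Instead I would extend $g$ by the constant value $g(v)$ on $\mathcal R$, obtaining $\tilde g\in H^1(\Graph_i')$ with $\tilde g'\equiv0$ on $\mathcal R$; to restore the mean-zero constraint I subtract the constant $c:=|\Graph_i'|^{-1}\int_{\Graph_i'}\tilde g = |\Graph_i'|^{-1}g(v)|\mathcal R|$. Then $\tilde g-c$ is admissible in \eqref{eq:mu2} on $\Graph_i'$, its Dirichlet energy equals $\int_{\Graph_i}|g'|^2$, and a direct computation gives
\begin{displaymath}
\int_{\Graph_i'}|\tilde g-c|^2 = \int_{\Graph_i}|g|^2 + g(v)^2\,|\mathcal R|\,\frac{|\Graph_i|}{|\Graph_i'|} \ge \int_{\Graph_i}|g|^2,
\end{displaymath}
using $|\Graph_i'|=|\Graph_i|+|\mathcal R|$. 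Hence the Rayleigh quotient of $\tilde g-c$ is at most that of $g$, namely $\mu_2(\Graph_i)$, so $\mu_2(\Graph_i')\le\mu_2(\Graph_i)$.

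Finally, one checks that every attachment in the construction is of this admissible form: the gluing vertex $v$ is always a point at which the current cluster is still cut from leftover material, hence lies in the current boundary $\partial\Graph_i$, so the hypothesis $v\in\partial\Graph_i$ (which is exactly what legitimizes the zero-extension in the Dirichlet case) holds at each step, even when both endpoints of the glued edge already belong to the cluster, as we glue at $v$ only and leave the other endpoint as a new boundary point. Applying the monotonicity at each of the finitely many steps yields $\mu_2(\Graph_i')\le\mu_2(\Graph_i)$ and $\lambda_1(\Graph_i')\le\lambda_1(\Graph_i)$ for every $i$, whence $\nenergy[k](\Partition')\le\nenergy[k](\Partition)$ and $\denergy[k](\Partition')\le\denergy[k](\Partition)$, as required. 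I expect the main obstacle to be precisely this $\mu_2$ step: producing a competitor that simultaneously respects the mean-zero constraint of \eqref{eq:mu2} and keeps the Rayleigh quotient under control, which the constant extension together with the length bound $|\mathcal R|\le|\Graph_i'|$ achieves.
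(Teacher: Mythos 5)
Your per-step monotonicity is correct, and it is in fact a self-contained proof of the surgery principle that the paper simply cites at this point (\cite[Theorem~3.4]{BKKM19}): for $\lambda_1$ the zero extension is admissible because the gluing vertex is a Dirichlet point of the cluster, and for $\mu_2$ your constant extension plus mean correction gives exactly
\begin{displaymath}
	\int_{\Graph_i'}|\tilde g-c|^2 \;=\; \int_{\Graph_i}|g|^2 + g(v)^2\,|\mathcal R|\,\frac{|\Graph_i|}{|\Graph_i'|} \;\ge\; \int_{\Graph_i}|g|^2,
\end{displaymath}
with unchanged Dirichlet energy, so the Rayleigh quotient does not increase. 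The overall strategy --- absorb the leftover material into the clusters by gluings at single vertices and use that the energies are maxima over clusters --- is also the paper's.

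The gap is in the combinatorics of your absorption scheme: gluing leftover \emph{edges} one at a time does not in general produce an \emph{exhaustive extension} in the sense of Definition~\ref{def:exhaustive-extension}, because condition (i) there (that $\Graph_\Partition$ be a cut of $\Graph_{\Partition'}$) can fail. The point is that attaching an edge $e$ to a cluster at one endpoint is \emph{not} a pure gluing of the previous minimal cut graph: if $e$ belongs to a larger leftover component, realizing the enlarged cluster as a connected component forces a \emph{new} cut at the far endpoint of $e$, detaching $e$ from the rest of its component. Concretely, suppose the leftover part of $\Graph_\Partition$ is a path $e_1\cup e_2$ through an interior vertex $w$, meeting cluster $\Graph_1$ at $a$ and cluster $\Graph_2$ at $b$. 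Your scheme allows gluing $e_1$ to $\Graph_1$ at $a$ and then $e_2$ to $\Graph_2$ at $b$; the resulting $\Partition'$ is an exhaustive $k$-partition with smaller energies, but its minimal cut graph must cut at $w$, whereas $w$ is an uncut vertex of $\Graph_\Partition$. Hence the vertex $w$ of $\Graph_\Partition$ is contained in no vertex of $\Graph_{\Partition'}$, so $\Graph_\Partition$ is not a cut of $\Graph_{\Partition'}$ and $\Partition'$ is not an extension of $\Partition$; this also shows that, contrary to your claim, the rank need not drop by one at each of your steps. The repair is immediate and is exactly what the paper does: absorb each connected component of the leftover part of $\Graph_\Partition$ \emph{in one piece}, gluing it to a neighbouring cluster at a single boundary vertex. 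Then each step merges exactly one pair of vertices of the current cut graph and introduces no new cuts, so condition (i) holds at the end; and since your monotonicity lemma is already stated for an arbitrary connected graph $\mathcal R$ attached at one vertex, your energy estimates apply verbatim to this corrected construction.
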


\begin{proof}
We deal with the $N$ case; in the Dirichlet case the argument is the same. Suppose $\Partition=(\Graph_1, \ldots, \Graph_k)$ and let $u_1, \ldots, u_k$ be any eigenfunctions associated with $\mu_2(\Graph_1),\ldots, \mu_2(\Graph_k)$, respectively. Suppose the minimal cut graph $\Graph_{\Partition} = (\VertexSet_\Partition,\EdgeSet)$ associated with $\Partition$ has connected components
\begin{displaymath}
	\Graph_1, \ldots, \Graph_k, \Graph_{k+1}, \ldots, \Graph_{m},
\end{displaymath}
where we assume that $m>k$ since otherwise the partition $\Partition$ would be exhaustive and there would be nothing to show. Let $\Graph_i=(\VertexSet_i,\EdgeSet_i)$ for all $i=1,\ldots, m$.

Let $v\in \partial\Partition$ be a boundary vertex between some $\Graph_i$, $i\leq k$, and $\Graph_j$, $k<j \leq m$ (which exists since by assumption the partition is not exhaustive, so that the respective unions of the $\Graph_i$, $i\leq k$ and the $\Graph_j$, $j>k$, are both nonempty, and $\Graph$ is connected, so these unions share a boundary), so that there exist $v_i\in \VertexSet_i$ for some $i\le k$ and $v_j\in \VertexSet_j$ for some $k<j\le m$  such that $v_i, v_j\subset v$. We define a (first) extension $\Partition^{(1)}=(\Graph_1, \ldots, \Graph_{i-1},\Graph_i^{(1)},\Graph_{i+1},\ldots,\Graph_k)$, where $\Graph_i^{(1)}$ is the (connected) union of $\Graph_i$ and $\Graph_j$, glued at $v_i$ and $v_j$: in particular, the minimal cut graph $\Graph_{\Partition^{(1)}}$ has edge set $\EdgeSet$ and vertex set
\begin{displaymath}
	\VertexSet_{\Partition^{(1)}}= \VertexSet_{\Partition} \setminus \{v_i, v_j\} \cup \{v_i \cup v_j\};
\end{displaymath}
Since $\Graph_i$ and $\Graph_j$ are glued at a single vertex of $\Graph_i^{(1)}$, \cite[Theorem~3.4]{BKKM19} yields $\mu_2(\Graph_i^{(1)}) \leq \mu_2 (\Graph_i)$ and hence $\nenergy[k](\Partition^{(1)})\le \nenergy[k](\Partition)$. We successively construct partitions $\Partition^{(\ell+1)}$ from $\Partition^{(\ell)}$ such that $\nenergy[k](\Partition^{(\ell+1)})\le \nenergy[k](\Partition^{(\ell)})$. Then $\Partition' := \Partition^{(m-k)}$ is necessarily exhaustive, and $\nenergy[k](\Partition') \le \nenergy[k](\Partition)$.
\end{proof}

\begin{corollary}
\label{cor:existenceof}
For all $k\geq 1$ we have
\begin{displaymath}
\begin{aligned}
	\noptenergylax[k](\Graph) &= \inf \{\nenergy[k](\Partition): \Partition \text{ is an \emph{exhaustive} $k$-partition of $\Graph$} \},\\
	\doptenergy[k](\Graph) &= \inf \{\denergy[k](\Partition): \Partition \text{ is an \emph{exhaustive} $k$-partition of $\Graph$} \}.
\end{aligned}
\end{displaymath}
\end{corollary}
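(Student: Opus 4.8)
The plan is to establish the two identities in Corollary~\ref{cor:existenceof} by combining Lemma~\ref{lem:Partitionexh} with the existence result from \cite[Section~4]{KeKuLeMu20}, which already guarantees that the infima over \emph{exhaustive} $k$-partitions are attained. Since every exhaustive $k$-partition is in particular a $k$-partition (an element of $\mathfrak C_k(\Graph)$), the infimum over \emph{all} $k$-partitions is taken over a larger set, so one inequality is immediate: for both the $N$ and Dirichlet cases,
\begin{displaymath}
	\noptenergylax[k](\Graph) = \inf_{\Partition \in \mathfrak C_k} \nenergy[k](\Partition) \le \inf \{\nenergy[k](\Partition): \Partition \text{ exhaustive}\},
\end{displaymath}
and similarly for $\doptenergy[k]$. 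Thus the content of the corollary is the reverse inequality, namely that restricting to exhaustive partitions does not raise the infimum.

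First I would fix the $N$ case and take an arbitrary $\Partition \in \mathfrak C_k(\Graph)$. By Lemma~\ref{lem:Partitionexh} there exists an exhaustive extension $\Partition' \in \mathfrak C_k(\Graph)$ with $\nenergy[k](\Partition) \ge \nenergy[k](\Partition')$. Since $\Partition'$ is exhaustive, its energy is bounded below by the infimum over exhaustive $k$-partitions, giving
\begin{displaymath}
	\nenergy[k](\Partition) \ge \nenergy[k](\Partition') \ge \inf \{\nenergy[k](\widetilde\Partition) : \widetilde\Partition \text{ exhaustive $k$-partition of } \Graph\}.
\end{displaymath}
As $\Partition$ was an arbitrary element of $\mathfrak C_k(\Graph)$, taking the infimum over all such $\Partition$ on the left yields $\noptenergylax[k](\Graph) \ge \inf\{\nenergy[k](\widetilde\Partition) : \widetilde\Partition \text{ exhaustive}\}$, which is exactly the reverse inequality. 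The Dirichlet case is identical, using the second inequality in Lemma~\ref{lem:Partitionexh}, namely $\denergy[k](\Partition) \ge \denergy[k](\Partition')$.

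I do not anticipate any genuine obstacle here, as essentially all the work has been front-loaded into Lemma~\ref{lem:Partitionexh}; the corollary is a routine two-sided-inequality argument (``larger index set gives smaller or equal infimum'' one way, monotone extension the other way). The only point requiring a word of care is the bookkeeping around the index $k$: one must verify that the exhaustive extension $\Partition'$ produced by Lemma~\ref{lem:Partitionexh} still has exactly $k$ clusters, so that it is legitimately a competitor in the infimum defining $\noptenergylax[k]$ and $\doptenergy[k]$ (rather than a partition with a different number of clusters). This is already built into the statement of Lemma~\ref{lem:Partitionexh}, which asserts $\Partition' \in \mathfrak C_k(\Graph)$, so no additional argument is needed. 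Accordingly, I would keep the proof short, treating the $N$ case in full and remarking that the Dirichlet case follows verbatim.
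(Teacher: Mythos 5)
Your proposal is correct and follows exactly the paper's intended route: the paper states Corollary~\ref{cor:existenceof} as an immediate consequence of Lemma~\ref{lem:Partitionexh}, with precisely the two-sided argument you spell out (the trivial inequality from $\mathfrak{R}$estricting the index set, and the reverse inequality via the exhaustive extension of Lemma~\ref{lem:Partitionexh}). Your additional remark that $\Partition'$ retains exactly $k$ clusters is the right bookkeeping point, and it is indeed already guaranteed by the statement of the lemma.
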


To repeat: when minimizing the energies $\nenergy[k]$, $\denergy[k]$ among all $k$-partitions, it makes no difference whether we minimize over \emph{all} $k$-partitions, or over all \emph{exhaustive} $k$-partitions.

\section{Proof of Theorem~\ref{thm:intromain1}}
\label{sec:Proofinterlacing}

Assume $\Graph$ is a connected metric graph with first Betti number $\beta= |\EdgeSet|-|\VertexSet|+1$.  We first wish to give an intuitive explanation as to why Theorem~\ref{thm:intromain1} should hold. So let $\Partition=(\Graph_1, \ldots, \Graph_k)\in \mathfrak C_k(\Graph)$ be an exhaustive $k$-partition realizing the minimum for $\noptenergylax[k](\Graph)$. 
Consider any eigenfunctions $u_1, \ldots, u_k$ on $\Graph_1,\ldots,\Graph_k$ associated with $\mu_2 (\Graph_1),\ldots, \mu_2 (\Graph_k)$, respectively. We extend each function by zero to obtain an $L^2$-function on $\Graph$, which can also be treated as an element of $\bigoplus H^1(e)$, and which we still denote by $u_i$, $i=1,\ldots,k$. Now since each of these functions necessarily changes sign the sets
\begin{displaymath}
u_{i,+} = \mathbb{1}_{\{u_{i}>0\}} u_{i}, \qquad u_{i,-} = \mathbb{1}_{\{u_{i}<0\}} u_{i} ,
\end{displaymath}
$i=1,\ldots, k$, are all non-empty. Suppose we can \emph{match} these eigenfunctions at the cut vertices in the sense that there exist $\alpha_{1,+}, \alpha_{1,-}, \ldots, \alpha_{k,+}, \alpha_{k,-} \in \mathbb R\setminus \{0\}$ such that for all $v\in \CutSet(\Graph_{\Partition}:\Graph)$ 
\begin{equation}
\label{eq:matching}
\alpha_{i, \operatorname{sign}(u_i(v_1))} u_i(v_1) = \alpha_{j, \operatorname{sign}(u_j(v_2))} u_j(v_2)
\end{equation}
for all $v_1, v_2\in \CutSet_v(\Graph_{\Partition})$. 
Then 
\begin{equation}
\label{eq:udef}
u:=\alpha_{1,+} u_{1,+}+ \alpha_{1,-} u_{1,-} + \ldots + \alpha_{k,+} u_{k,+} + \alpha_{k,-} u_{k,-}\in H^1(\Graph).
\end{equation}
How many nodal domains will $u$ have on $\Graph$? We know that:
\begin{enumerate}
\item regarded as a function on the cut graph $\Graph_\Partition$ it has at least $2k$, since it changes sign on each connected component $\Graph_i$, $i=1,\ldots,k$, of $\Graph_\Partition$;
\item by Lemma~\ref{lem:usefulestimate} we have
\begin{displaymath}
	k-1\le \operatorname{rank}(\Graph_\Partition:\Graph)\le k-1+\beta;
\end{displaymath}
\item every time we make a cut of $\Graph$ of rank $1$ (cf.\ Lemma~\ref{lem:usefultounderstand}) the number of nodal domains of $u$ considered as a function on the cut graph increases by at most $1$.
\end{enumerate}
It follows that $u\in H^1(\Graph)$ admits at least $2k-\operatorname{rank}(\Graph_\Partition:\Graph)\ge k+1-\beta$ nodal domains; moreover, its Rayleigh quotient on each of these nodal domains will be no larger than $\nenergy[k](\Partition) = \max_i \mu_2 (\Graph_i)$. Thus, \emph{if} $u \in H^1 (\Graph)$, then we can use the associated nodal partition to obtain
\begin{displaymath}
\noptenergy[k](\Graph) \ge {\noptenergylax[k]}(\Graph) \ge \doptenergy[2k-\operatorname{rank}(\Graph_\Partition:\Graph)](\Graph) \ge \doptenergy[k+1-\beta](\Graph),
\end{displaymath}
which is Theorem~\ref{thm:intromain1}. But of course in general we cannot expect the matching conditions \eqref{eq:matching} to hold.

\begin{figure}[ht]
	\begin{tikzpicture}[scale=1.2]
		\coordinate (c) at (0,0);
		\foreach \i in {1,2,3} {
			\coordinate (v\i) at (120*\i:0.5);
			\coordinate (u\i) at (120*\i:0.7);
			\coordinate (w\i) at (120*\i:1.7);
			\coordinate (x\i) at (120*\i:1.9);
			\coordinate (y\i) at (120*\i:2.4);
			\coordinate (f\i) at ($(120*\i:.6)+({120*\i+90}:.3)$);
			\coordinate (g\i) at ($(120*\i:.6)+({120*\i-90}:.3)$);
			\coordinate (d\i) at ($(120*\i:1.8)+({120*\i+90}:.3)$);
			\coordinate (e\i) at ($(120*\i:1.8)+({120*\i-90}:.3)$);
			\draw[thick] (c) -- (v\i) (u\i)--(w\i) (x\i)--(y\i);
			\draw[dashed, thick] (f\i) -- (g\i) (d\i) -- (e\i);
			\draw[fill] (y\i) circle (1.75pt);
			\draw[fill=white] (v\i) circle (1.75pt) (u\i) circle (1.75pt) (w\i) circle (1.75pt) (x\i) circle (1.75pt);}
		\draw[fill] (c) circle (1.75pt);
	\end{tikzpicture}\hspace{3em}
	\begin{tikzpicture}[scale=1.2]
		\coordinate (c) at (0,0);
		\foreach \i in {1,2, 3} {
		\coordinate (u\i) at (120*\i:0.2);
		\coordinate (w\i) at (120*\i:1.2);
		\coordinate (x\i) at (120*\i:1.4);
		\coordinate (y\i) at (120*\i:2.4);
		\coordinate (f\i) at ($(120*\i:.1)+({120*\i+90}:.3)$);
		\coordinate (g\i) at ($(120*\i:.1)+({120*\i-90}:.3)$);
		\coordinate (d\i) at ($(120*\i:1.3)+({120*\i+90}:.3)$);
		\coordinate (e\i) at ($(120*\i:1.3)+({120*\i-90}:.3)$);
		\draw[thick] (u\i)--(w\i) (x\i)--(y\i);
		\draw[dashed, thick] (f\i) -- (g\i) (d\i) -- (e\i);
		\draw[fill] (y\i) circle (1.75pt);
		\draw[fill] (u\i) circle (1.75pt) (w\i) circle (1.75pt) (x\i) circle (1.75pt);}
	\end{tikzpicture}
	\caption{Spectral minimal partitions for the star graph from Example~\ref{ex:simpleexampleequality}: $k=7$ in the Dirichlet case (left) and $k=6$ in the natural case (right). Open circles represent Dirichlet conditions, solid ones correspond to natural conditions. Note that eigenvalues of all the clusters across both partitions are equal to $4\pi^2$, the first Dirichlet eigenvalue, and first nontrivial Neumann eigenvalue, of an interval of length $\tfrac{1}{2}$.}
	\label{fig:simpleexampleequality}
\end{figure}
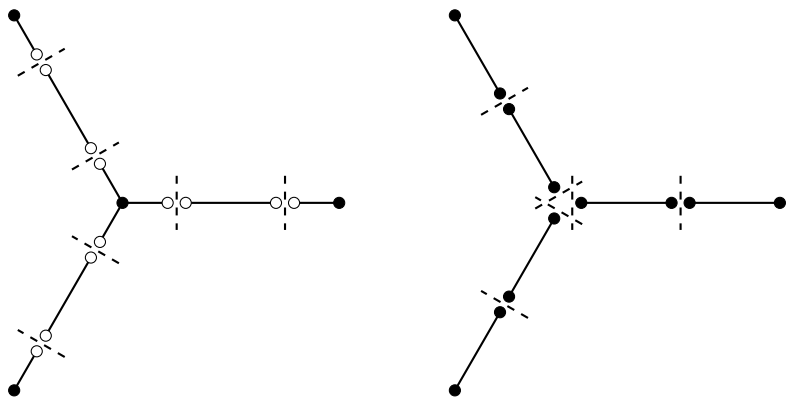

\begin{example}
\label{ex:simpleexampleequality}
Before proceeding,. we give a simple example to show that equality is possible in both Theorem~\ref{thm:intromain1}, that is, that we may have equality
\begin{equation}
\label{eq:simpleexampleequality}
	\noptenergy[k](\Graph) = \doptenergy[k+1-\beta](\Graph),
\end{equation}
as well as in the above argument. Consider the equilateral $m$-star, $m \geq 3$, with $m$ edges of length $1$ each. We identify each edge $e$ with the unit interval $[0,1]$, with $0$ corresponding to the central vertex. As shown in \cite[Lemmata~7.1 and~7.4]{HKMP20} we have
\begin{equation}
	\noptenergy[jm](\mathcal S_m) = \pi^2 j^2= \doptenergy[jm+1](\mathcal S_m).
\end{equation} 
for all $j \geq 1$ (see also Figure~\ref{fig:simpleexampleequality} for a special case of spectral minimal partitions for the $3$-star; in fact, Figure~\ref{fig:simpleexampleequality} is representative for the form of all these spectral minimal partitions). Moreover, in this case there is a nodal partition corresponding to $\doptenergy[jm+1](\mathcal S_m)$, which comes from taking eigenfunctions of the form $u_{e,j} (x) = \cos (\pi j x)$ on each edge $e \simeq [0,1]$. Note that equality need not hold for integers $k$ not of the form $jm+1$, since for example
\begin{equation}
	\noptenergy[jm-1]= \frac{\pi^2 m^2 j^2}{L^2} > \frac{\pi^2 m^2 (j-1/2)^2}{L^2} = \doptenergy[jm]
\end{equation}
for all $j \geq 1$.
\end{example}

Note that \eqref{eq:simpleexampleequality} also holds for the loop and for the interval, for all $k \geq 1$.

\begin{remark}
\label{rmk:onepartoftheinequalityinintro}
Suppose $u$ is an eigenfunction, with eigenvalue $\lambda$, of the (standard) Laplacian on $\Graph$, and suppose that considering the maximal cut of $\Graph$ at all points where $u$ reaches a local nonzero maximum or minimum generates a partition with $k = \xi (u)$ clusters. (In the language of Section~\ref{sec:proof2} and \cite{ABBE20} this means $u$ has $\xi(u)$ \emph{Neumann domains}.) Then $\lambda$ equals the first nontrivial standard Laplacian eigenvalue on each cluster, with eigenfunction $u$ (see \cite[Lemma~8.1]{ABBE20}). Now by construction we can certainly match these restrictions of the eigenfunction at the cut points, in accordance with the above discussion. As we have seen, the resulting Dirichlet partition consists of at least $\xi(u)+1-\beta$ clusters, which in this case are clearly the nodal domains of $u$. Thus we recover one part of \eqref{eq:motivationalnodalstuff}.
\end{remark}

\begin{lemma}
\label{lem:constructiongeneral}
Let $\Graph$ be a metric graph and $\Graph'$ a cut of $\Graph$. Let $r:=\operatorname{rank}(\Graph':\Graph)$ and $k>r$, then for any $k$-partition $\Partition'=(\Graph_1', \ldots, \Graph'_k)$  of $\Graph'$ there exists a $(k-r)$-partition $\Partition=(\Graph_1, \ldots, \Graph_{k-r})$ of $\Graph$ such that
\begin{equation}
\label{eq:dwhattoshow}
	\denergy[k](\Partition') \ge \denergy[k-r](\Partition).
\end{equation}
\end{lemma}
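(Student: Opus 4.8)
The plan is to reduce to the case of a simple cut ($r=1$) and induct on the rank. By Lemma~\ref{lem:usefultounderstand} the cut factors as a chain of simple cuts $\Graph=\Graph^{(0)},\dots,\Graph^{(r)}=\Graph'$, so it suffices to prove the following simple-cut step: whenever $\Graph'$ is a simple cut of a graph $\hat\Graph$, every $k$-partition of $\Graph'$ with $k\ge 2$ yields a $(k-1)$-partition of $\hat\Graph$ of no larger Dirichlet energy. Applying this step $r$ times down the chain and reading off the indices gives the lemma, the hypothesis $k>r$ being exactly what keeps the cluster count at least one at every stage.

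For the step, relabel so that $\Graph'$ is a simple cut of $\Graph$ at a vertex $v=v_1'\cup v_2'$ with $v_1',v_2'\in\VertexSet'$, and let $\Partition'$ be a $k$-partition of $\Graph'$. First I would pass to an exhaustive partition: Lemma~\ref{lem:Partitionexh}, applied inside $\Graph'$, replaces $\Partition'$ by an exhaustive $k$-partition of no greater energy, which removes ``free'' edges and ensures at least one cluster sits at each of $v_1'$ and $v_2'$. Let $A$, resp.\ $B$, be the clusters having a vertex contained in $v_1'$, resp.\ $v_2'$. Regluing $v_1'$ and $v_2'$ into $v$ makes all clusters of $A\cup B$ mutually adjacent at $v$, and I build the partition $\Partition$ of $\Graph$ by merging exactly one suitably chosen pair of clusters---realized by the cut of $\Graph$ that undoes the split only between that pair---so that precisely one cluster disappears.

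The crux is bounding the first eigenvalue of the merged cluster, and the genuine difficulty is that reconstructing $\Graph$ can impose a spurious Dirichlet condition at $v$ on a cluster for which $v_1'$ (or $v_2'$) had been a free vertex. The variational engine is twofold: if two clusters are joined at a single vertex $w$ at which one of their first eigenfunctions vanishes, extending that eigenfunction by zero is admissible on the union and shows its first eigenvalue is at most that cluster's eigenvalue; whereas if both are free at $w$, rescaling the two ground states to agree at $w$ produces a test function whose Rayleigh quotient is a convex combination of the two eigenvalues. Accordingly I would split into cases by the sizes of $A$ and $B$. If $|A|\ge 2$ or $|B|\ge 2$, then the corresponding vertex is a genuine partition boundary, so some cluster vanishes at $v$; merging it with any neighbour at $v$ bounds the union by that (unchanged) eigenvalue. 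If $|A|=|B|=1$ with distinct clusters, merging the two makes $v$ interior and the scaling argument applies. And if a single cluster covers both $v_1'$ and $v_2'$, regluing only creates a cycle, so I instead merge this cluster with a neighbour across a genuine boundary $w\ne v$, which exists because $\Graph$ is connected and $k\ge 2$. In each case the merged cluster has first eigenvalue at most $\denergy[k](\Partition')$.

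I expect the main obstacle to be precisely this uniform energy bookkeeping rather than the combinatorics of counting clusters. The observation that makes every case go through is that one never has to estimate the eigenvalue of the cluster carrying the spurious Dirichlet condition at $v$: its merge partner can always be chosen either to vanish at $v$ (so the decoupled union inherits that partner's unchanged eigenvalue) or, when both partners are free at $v$, to be matched with it there; so the estimate always falls back on an original, unchanged cluster eigenvalue, and hence on $\denergy[k](\Partition')$.
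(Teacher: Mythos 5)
Your overall strategy --- reduce to a simple cut via Lemma~\ref{lem:usefultounderstand}, then repair the partition by merging one well-chosen pair of clusters, bounding the merged cluster's eigenvalue either by zero-extending an eigenfunction that vanishes at the cut vertex or by matching two ground states there --- uses the same variational ingredients as the paper, but there is a genuine gap at the first step of your simple-cut argument. You invoke Lemma~\ref{lem:Partitionexh} ``inside $\Graph'$'' to pass to an exhaustive partition and thereby guarantee $|A|,|B|\geq 1$. But $\Graph'$, being a cut of $\Graph$, need not be connected, and Lemma~\ref{lem:Partitionexh} (whose proof uses connectedness of the ambient graph to find a boundary vertex between the covered and uncovered parts) can fail on disconnected graphs: if a connected component of $\Graph'$ contains no cluster of $\Partition'$, then no exhaustive extension of $\Partition'$ in $\Graph'$ exists at all. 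This situation is not exotic: take $\Graph$ to be a lasso, $v$ its degree-three vertex, $\Graph'$ the simple cut separating the loop from the pendant edge, and $\Partition'$ a partition of the loop alone; then $v_2'$ lies in a component untouched by $\Partition'$, so $B=\emptyset$, a case your analysis ($|A|\geq 2$ or $|B|\geq 2$; $|A|=|B|=1$) never reaches. Moreover this is exactly a configuration where the ``spurious Dirichlet condition'' you identify actually bites: if in addition $|A|=1$, the unique cluster at $v_1'$ is free there as a cluster of $\Partition'$, but any cut of $\Graph$ exhibiting it as a connected component must cut at $v$, so as a cluster of a partition of $\Graph$ it acquires a Dirichlet condition at $v_1'$ and its eigenvalue can only increase --- and the merge partner across $v$ that your final paragraph prescribes does not exist. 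The case is repairable with your own tools (merge that cluster with a neighbour across a genuine boundary $w\neq v$, which exists because $k\geq 2$ and the clusters exhaust the component containing them; or, if $|A|\geq 2$, simply retain $k-1$ of the unchanged clusters), but as written the proof does not cover it.

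For comparison, the paper sidesteps all of this bookkeeping by never manipulating the partition combinatorially: it glues the zero-extended cluster eigenfunctions into one function $\tilde u$, rescales or zeroes out at most one cluster so as to obtain $u\in H^1(\Graph)$, and then takes the nodal partition of $u$. A three-way case distinction on the values $\tilde u(v_1)$, $\tilde u(v_2)$ (both zero; both nonzero in distinct clusters; otherwise) covers uniformly every configuration --- disconnected $\Graph'$, uncovered components, the cycle case --- precisely because a nodal domain of $u$ satisfies the required Dirichlet conditions at its boundary for free, so no exhaustiveness and no choice of merge partner is ever needed.
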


\begin{proof}
By a simple induction argument based on Lemma~\ref{lem:usefultounderstand} it suffices to prove the result for $r=1$. So suppose $\Graph'$ is a simple cut of $\Graph$ and $\Partition'=(\Graph_1', \ldots, \Graph_k')$ is an arbitrary $k$-partition of $\Graph'$. We let $\tilde u_i \in H^1_0 (\Graph_i',\partial\Graph_i')$ be an eigenfunction associated with $\lambda_1(\Graph_i')$, $i=1,\ldots,k$, then the function $\tilde u$ such that $\tilde u|_{\Graph_i'} = \tilde u_i$ for all $i$ belongs to $H^1 (\Graph')$ and has nodal partition exactly $\Partition'$. We show that there exists $u\in H^1(\Graph)$ with at least $k-1$ nodal domains such that, likewise, $u|_{\Graph_i'} = \tilde u_i$ for all $i$. A simple argument using the nodal partition $\Partition$ associated with $u$ and fact that $\denergy[k](\Partition') = \max_i \lambda_1 (\Graph_i')$ leads to $\denergy[k](\Partition') \ge \denergy[k-r](\Partition)$.

To prove the former statement, suppose that $v$ is the unique vertex in $\CutSet(\Graph':\Graph)$, and $v_1, v_2\in \VertexSet'$ such that $v=v_1\cup v_2$. Suppose without loss of generality that $v_1\in \Graph_1$ and $v_2\in \Graph_i$ for some $i=1,\ldots,k$ (where, in particular, $i=1$ is possible).

\emph{Case 1: $\tilde u(v_1) =0$ and $\tilde u(v_2)=0$.}
Then since $\tilde u(v_1)=\tilde u(v_2)$ we infer $\tilde u\in H^1(\Graph)$ and we are done since $\tilde u$ admits $n$ connected nodal domains.

\emph{Case 2: $\tilde u(v_1)\neq 0 \neq \tilde u(v_2)$, and $i \neq 1$.} Then there exist $\alpha_1, \alpha_2\neq 0$ such that
\begin{displaymath}
	\alpha_1 \tilde u(v_1)= \alpha_2 \tilde u(v_2)
\end{displaymath}
and we may define
\begin{displaymath}
	u(x): = \begin{cases}
		\alpha_1 \tilde u(x), \qquad & x\in \Graph_1\\
		\alpha_2 \tilde u(x), \qquad & x\in \Graph_i\\
		\tilde u(x) \qquad &\text{otherwise,}
	\end{cases}
\end{displaymath}
so that $u\in H^1(\Graph)$ with $k-1$ nodal domains.

\emph{Case 3: Otherwise.} Suppose without loss of generality that $\tilde u(v_1)\neq 0$ and $\tilde u(v_2)=0$. Then we define
\begin{displaymath}
	u(x):= \begin{cases}
		\tilde u(x), &\qquad x\not \in \Graph_1\\
		0, &\qquad \text{otherwise,}
	\end{cases}
\end{displaymath}
and by construction $u\in H^1(\Graph)$ has $k-1$ nodal domains.
\end{proof}

\begin{corollary}
\label{cor:constructiongeneral}
Let $\Graph$ be a metric graph and $\Graph'$ a cut of $\Graph$, and suppose $r:=\operatorname{rank}(\Graph':\Graph)$. Then
\begin{displaymath}
	\doptenergy[k](\Graph')\ge  \doptenergy[k-r](\Graph).
\end{displaymath}
\end{corollary}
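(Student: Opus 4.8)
The plan is to deduce Corollary~\ref{cor:constructiongeneral} directly from Lemma~\ref{lem:constructiongeneral} by taking an infimum over all partitions of the cut graph $\Graph'$. First I would fix $k > r$ (noting that for $k \leq r$ the statement either is vacuous or follows from monotonicity, since $\doptenergy[j]$ is nondecreasing in $j$ and the right-hand side $\doptenergy[k-r]$ is interpreted appropriately for $k - r \leq 0$; in practice the interesting regime is $k > r$). By Corollary~\ref{cor:existenceof} applied to the graph $\Graph'$, the energy $\doptenergy[k](\Graph')$ is the infimum of $\denergy[k](\Partition')$ over all $k$-partitions $\Partition'$ of $\Graph'$, and there is a minimizing exhaustive partition attaining it.

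Next I would take such a (near-)minimizing $k$-partition $\Partition'$ of $\Graph'$. Lemma~\ref{lem:constructiongeneral} then furnishes a $(k-r)$-partition $\Partition$ of $\Graph$ with
\begin{displaymath}
	\denergy[k](\Partition') \geq \denergy[k-r](\Partition) \geq \doptenergy[k-r](\Graph),
\end{displaymath}
where the second inequality is simply the definition of $\doptenergy[k-r](\Graph)$ as an infimum over $(k-r)$-partitions of $\Graph$. Since $\Partition'$ realizes (or approximates) the infimum defining $\doptenergy[k](\Graph')$, taking the infimum over $\Partition'$ on the left-hand side yields
\begin{displaymath}
	\doptenergy[k](\Graph') = \inf_{\Partition'} \denergy[k](\Partition') \geq \doptenergy[k-r](\Graph),
\end{displaymath}
which is exactly the claimed inequality.

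The argument is essentially a one-line consequence of the lemma, so there is no genuine obstacle; the only point requiring minor care is bookkeeping around the case $k \leq r$, where the object $\doptenergy[k-r](\Graph)$ is not defined for $k - r \leq 0$. I would handle this by restricting attention to $k > r$ in the statement (which is the only case used in the sequel), or by adopting the convention that $\doptenergy[j](\Graph) = 0$ for $j \leq 0$, in which case the inequality holds trivially. The substantive content has already been established in Lemma~\ref{lem:constructiongeneral} via the cut-and-paste construction of eigenfunctions; the corollary merely repackages it at the level of optimal energies by passing to the infimum.
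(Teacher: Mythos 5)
Your proof is correct and is exactly the argument the paper intends: Corollary~\ref{cor:constructiongeneral} is stated there without proof, as an immediate consequence of Lemma~\ref{lem:constructiongeneral} obtained by applying that lemma to a (near-)minimizing $k$-partition of $\Graph'$ and passing to the infimum, precisely as you do. Your side remarks are harmless (the appeal to Corollary~\ref{cor:existenceof} is not even needed, since $\doptenergy[k](\Graph')$ is by definition an infimum over all $k$-partitions and Lemma~\ref{lem:constructiongeneral} applies to any of them, and the restriction $k>r$ is implicit in the statement for $\doptenergy[k-r]$ to be defined).
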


\begin{lemma}
\label{lem:interlacingestimate1}
Let $\Graph$ be a metric graph and let $\Partition=(\Graph_1, \ldots, \Graph_k)$ be a $k$-partition with minimal cut graph $\Graph_\Partition$. Let $r:= \operatorname{rank}(\Graph_\Partition:\Graph)$, then there exists a $(2k-r)$-partition
\begin{displaymath}
	\Partition'=(\Graph_1, \ldots, \Graph_{2k-r})
\end{displaymath}
such that
\begin{displaymath}
	\nenergy[k](\Partition) \ge \denergy[2k-r](\Partition').
\end{displaymath}
\end{lemma}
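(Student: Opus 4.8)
The plan is to avoid constructing a single glued function on $\Graph$ directly (where the matching condition \eqref{eq:matching} generally fails) and instead to work on the cut graph $\Graph_\Partition$, where the clusters $\Graph_1,\dots,\Graph_k$ sit as genuine connected components. There I will build a Dirichlet $2k$-partition whose energy is controlled by $\nenergy[k](\Partition)$, and then transport it down to $\Graph$ using the already-established Lemma~\ref{lem:constructiongeneral}, which is exactly the tool that absorbs the matching difficulty one rank at a time.

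For the construction on $\Graph_\Partition$, I would choose for each $i=1,\dots,k$ an eigenfunction $u_i$ on $\Graph_i$ associated with $\mu_2(\Graph_i)$. Since $\mu_2(\Graph_i)$ is the first \emph{nontrivial} eigenvalue of the standard Laplacian, $u_i$ is orthogonal to the constants and hence changes sign, so both $\{u_i>0\}$ and $\{u_i<0\}$ are nonempty. Gluing the $u_i$ into a single function $u$ on $\Graph_\Partition$ (extended by $0$ on any component not among the $\Graph_i$), its nodal set is finite and $u$ has at least $2k$ nodal domains, since each $u_i$ contributes at least one positive and one negative nodal domain on the component $\Graph_i$. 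Cutting $\Graph_\Partition$ at these (non-degenerate) nodal points and selecting any $2k$ of the resulting nodal domains yields a valid $2k$-partition $\Partition''\in\mathfrak C_{2k}(\Graph_\Partition)$. On each chosen nodal domain $D$ the restriction $u|_D$ lies in $H^1_0(D,\partial D)$ and, integrating by parts using the standard conditions at the interior vertices of $D$ and the vanishing at $\partial D$, has Rayleigh quotient equal to $\mu_2(\Graph_{i(D)})$; hence $\lambda_1(D)\le \mu_2(\Graph_{i(D)})\le \nenergy[k](\Partition)$. Taking the maximum over the $2k$ chosen domains gives $\denergy[2k](\Partition'')\le \nenergy[k](\Partition)$.

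It then remains to descend from $\Graph_\Partition$ to $\Graph$. Since $\Graph_\Partition$ is a cut of $\Graph$ of rank $r$, I would apply Lemma~\ref{lem:constructiongeneral} with $\Graph'=\Graph_\Partition$ to the $2k$-partition $\Partition''$, obtaining a $(2k-r)$-partition $\Partition'$ of $\Graph$ with $\denergy[2k](\Partition'')\ge \denergy[2k-r](\Partition')$. Chaining the two estimates yields $\nenergy[k](\Partition)\ge \denergy[2k](\Partition'')\ge \denergy[2k-r](\Partition')$, which is the assertion. To invoke Lemma~\ref{lem:constructiongeneral} one needs $2k>r$; this is guaranteed in the situations where the lemma is used, for instance for an exhaustive $\Partition$ with $k\ge\beta$, since then Lemma~\ref{lem:usefulestimate} gives $r\le k-1+\beta$ and hence $2k-r\ge k+1-\beta\ge 1$.

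The main obstacle is precisely the content of Lemma~\ref{lem:constructiongeneral}, which I am treating as a black box: after gluing two sign-definite Dirichlet pieces across a single cut vertex, their boundary values generally cannot be matched, so no single $u\in H^1(\Graph)$ preserving all $2k$ nodal domains exists. The lemma circumvents this by showing that each rank-one gluing costs at most one nodal domain—either the two pieces match (its Cases 1 and 2) or one piece is discarded and replaced by $0$ (its Case 3)—so that $r$ successive gluings reduce the cluster count by at most $r$. The only remaining care is the bookkeeping that certifies at least $2k$ nodal domains on $\Graph_\Partition$ and that the selected subfamily is a legitimate element of $\mathfrak C_{2k}(\Graph_\Partition)$.
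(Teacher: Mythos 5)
Your argument is correct and is essentially the paper's own proof: in both, the eigenfunctions for $\mu_2(\Graph_i)$ supply nodal $2$-partitions of the clusters, hence a Dirichlet $2k$-partition of the cut graph $\Graph_\Partition$ whose energy is at most $\nenergy[k](\Partition)$, after which one descends to $\Graph$ across the rank-$r$ cut losing at most $r$ clusters (you invoke Lemma~\ref{lem:constructiongeneral} directly, the paper its Corollary~\ref{cor:constructiongeneral}, which is the same mechanism). Your explicit check that $2k>r$, guaranteed where the lemma is applied, is a minor point of care that the paper leaves implicit.
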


\begin{proof}
Suppose $\Graph_\Partition$ is the minimal cut graph of $\Partition=(\Graph_1, \ldots, \Graph_k)$. Let $u_i$ be an eigenfunction for $\mu_2 (\Graph_i)$ on $\Graph_i$, $i=1,\ldots, k$. Then $u_i$ necessarily changes sign on $\Graph_i$ and hence admits at least two nodal domains; denote by $\Partition_i = (\Graph_{i,+},\Graph_{i,-})$ any exhaustive extension of any corresponding nodal $2$-partition of $\Graph_i$. Then, since $\mu_2 (\Graph_i) \geq \max \{\lambda_1(\Graph_{i,+}), \lambda_1 (\Graph_{i,-})\}$,
\begin{displaymath}
	\nenergy[k](\Partition)\ge \max_{i=1,\ldots, k} \max\{ \lambda_1(\Graph_{i,+}), \lambda_1(\Graph_{i,+})\}\ge \doptenergy[2k](\Graph_{\Partition})\ge \doptenergy[2k-r](\Graph),
\end{displaymath}
where the last inequality follows from Corollary~\ref{cor:constructiongeneral}.
\end{proof}

We can now give the proof of Theorem~\ref{thm:intromain1}.

\begin{proof}[Proof of Theorem~\ref{thm:intromain1}]
Let $\Partition$ be any $k$-partition of $\Graph$. By Lemma~\ref{lem:usefulestimate} we have
\begin{displaymath}
	\operatorname{rank}(\Graph_{\Partition}:\Graph) \le k-1+\beta
\end{displaymath}
and so, applying Lemma~\ref{lem:interlacingestimate1}, taking the infimum over all such partitions and using the monotonicity of the mapping $j \mapsto \doptenergy[j] (\Graph)$, we obtain
\begin{displaymath}
	\noptenergylax[k](\Graph)\ge \doptenergy[k+1-\beta](\Graph).
\end{displaymath}
\end{proof}

\section{Proof of Theorem~\ref{thm:intromain2}}
\label{sec:proof2}

Just as the basic idea behind Theorem~\ref{thm:intromain1} is gluing together nodal domains of standard Laplacian eigenfunctions of the partition clusters to construct a test partition, here we will be interested in the ``dual'' problem of gluing together the so-called \emph{Neumann domains} of the cluster Dirichlet eigenfunctions (see, e.g., \cite{AB19,ABBE20}). 

Let us again start with an intuitive explanation of Theorem~\ref{thm:intromain2}. We suppose $\Graph$ is a metric graph with first Betti number $\beta$ and $|\NeuSet|$ leaves. We take $\Partition=(\Graph_1, \ldots, \Graph_k)\in \mathfrak C_k(\Graph)$ to be a fixed exhaustive $k$-partition of $\Graph$ and consider the respective first Dirichlet eigenfunctions $u_1, \ldots, u_k$ on $\Graph_1, \ldots, \Graph_k$, associated with $\lambda_1(\Graph_1), \ldots, \lambda_1(\Graph_k)$ and extended by zero on the rest of $\Graph$.

We decompose each $\Graph_i$ by taking the maximal cut (see Definition~\ref{def:totalcut}) of $\Graph_i$ at every point, without loss of generality a vertex $v \in \VertexSet (\Graph_i)$, at which $u_i$ attains a nonzero extremum, and thus in particular $\frac{\partial}{\partial \nu}|_e u_i (v) = 0$ on every edge $e$ incident with $v$. On each connected component $\widetilde\Graph_{i,1}, \ldots, \widetilde\Graph_{i,k_i}$, $k_i \geq 1$, the Neumann domains, $u_i$ is the first eigenfunction of the Laplacian with suitable mixed Dirichlet-standard conditions, and in particular $\lambda_1 (\Graph_i)$ is still the first eigenvalue of each $\widetilde\Graph_{i,j}$ by a standard variational argument (cf.\ \cite[Proof of Theorem~3.4]{BKKM17}, or also \cite[Lemma~8.1]{ABBE20} for a similar principle).

Now suppose that, given a cut vertex $v \in \CutSet(\Graph_{\Partition}:\Graph)$, we glue together all the neighboring Neumann domains $\Graph_{i_1,j_1}, \ldots, \Graph_{i_{k_v},j_{k_v}}$ at $v$ to form a cluster $\Graph' := \Graph_{i_1,j_1} \cup \ldots \cup \Graph_{i_{k_v},j_{k_v}}$; then, by taking a suitable linear combination of $u_{i_1}|_{\Graph_{i_1,j_1}}, \ldots, u_{i_{k_v}}|_{\Graph_{i_{k_v},j_{k_v}}}$ similarly to \eqref{eq:udef}, we obtain a test function on $\Graph'$, orthogonal to the constant functions for the right choice of coefficients, whose Rayleigh quotient is at most $\max \{\lambda_1 (\Graph_{i_1}), \ldots, \lambda_1 (\Graph_{i_{k_v}}) \} \leq \denergy[k] (\Partition)$.

Gluing such neighboring Neumann domains together at as many different cut vertices as possible (see also Figure~\ref{fig:zigzag}), we may thus construct a partition $\Partition'$ of $\Graph$ such that $\nenergy[] (\Partition') \leq \denergy[k] (\Partition)$.
\smallskip

The question is, how many clusters can $\Partition'$ have? Denote by $\widetilde\Partition = \{\widetilde{G}_{i,j}\}_{i,j}$ the partition of $\Graph$ which results from taking the maximal cut of each of the clusters of $\Partition$ in the sense described above, which will be a finer partition than $\Partition$ and $\Partition'$. We wish to determine how many clusters must be created when passing from $\Partition$ to $\widetilde\Partition$, and how many may be lost from $\widetilde\Partition$ to $\Partition'$.

For the first question, we wish to find a condition that guarantees that a cluster $\Graph_i$ of $\Partition$ will yield (at least) two in $\widetilde\Partition$, that is, that it contains at least two Neumann domains. A sufficient condition is that $\Graph_i$ have at least two Dirichlet (cut) vertices, and that $u_i$ reach an extremum on every \emph{trail} (non-self-intersecting path) in $\Graph_i$ connecting them. Observe that this need not be the case if the cluster contains a leaf or a cycle of $\Graph$ (for example if $\Graph_i$ is an interval with one Dirichlet and one Neumann condition, or lasso with a Dirichlet condition at its degree-one vertex). This motivates the following definition.

\begin{definition}
Suppose $\Partition = (\Graph_1, \ldots, \Graph_k)$ is a $k$-partition, $k \geq 2$, of $\Graph$. We say that a cluster $\Graph_i$ is \emph{benign} (in $\Graph$) if it contains neither a vertex of $\Graph$ of degree one, nor a cycle of $\Graph$; otherwise, we say it is \emph{malign}.
\end{definition}

Observe that any benign cluster of $\Graph$ must necessarily be a tree each of whose leaves belongs to the cut set $\CutSet(\Graph_{\Partition}:\Graph) = \partial\Partition$, while for malign clusters this is not necessarily the case. We see that if $\Partition$ has $k'$ malign clusters, then $\widetilde\Partition$ must have at least $2k-k'$ clusters.

The next question is how many clusters we may lose going from $\widetilde\Partition$ to $\Partition'$; the example of Figure~\ref{fig:zigzag} shows that the answer may be complicated.

\begin{figure}[ht]
\includegraphics[scale=0.7]{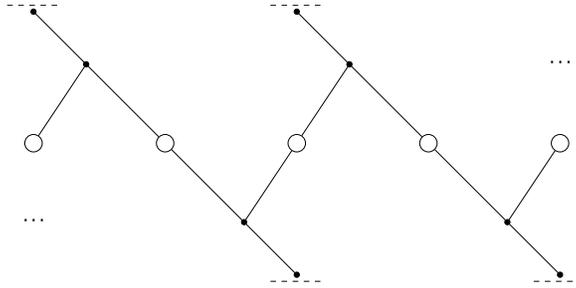}
\caption{A possible cluster of $\Partition'$ resulting by gluing the corresponding Neumann domains at the cut vertices of $\Partition$, which are the Dirichlet points (open circles) of the associated eigenfunctions. Observe that while this cluster is composed of a large number of Neumann domains, being constructed in this way it necessarily contains in its interior at least one boundary point of the original Dirichlet partition.}
\label{fig:zigzag}
\end{figure}

The following lemma formalizes the above reasoning and answers the latter question; the proof of Theorem~\ref{thm:intromain2} will then follow easily.

\begin{lemma}\label{lem:reverseestimate}
Let $\Graph$ be a compact, connected metric graph. Suppose $\Partition=(\Graph_1, \ldots, \Graph_k)$ is an exhaustive $k$-partition, $ k \geq 2$, with
\begin{displaymath}
\operatorname{rank}(\Graph_{\Partition}: \Graph)= k-1+r
\end{displaymath}
for some $0\le r \le \beta$ and suppose that $\Partition$ contains at most $1 \leq n \leq k - r$ malign clusters. Then there exists an exhaustive $k+1-n-r$-partition $\Partition'$ of $\Graph$ such that 
\begin{displaymath}
\denergy[k](\Partition) \ge \nenergy[k+1-n-r](\Partition'). 
\end{displaymath}
\end{lemma}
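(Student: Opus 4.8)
The plan is to make rigorous the ``gluing Neumann domains'' heuristic sketched before the lemma, tracking the cluster count through two passes: first refining $\Partition$ into its Neumann domains $\widetilde\Partition$, then coarsening $\widetilde\Partition$ into the desired $\Partition'$ by gluing at cut vertices. Throughout, the clusters are trees (or tree-like pieces) and the eigenfunctions $u_i$ associated with $\lambda_1(\Graph_i)$ are sign-definite on each $\Graph_i$ (being first Dirichlet eigenfunctions), so on each Neumann domain $\widetilde\Graph_{i,j}$ the function $u_i$ has one sign and exactly one interior nonzero extremum. First I would fix the first-eigenfunctions $u_1,\dots,u_k$, extended by zero, and form $\widetilde\Partition = \{\widetilde\Graph_{i,j}\}$ by taking the maximal cut of each $\Graph_i$ at its nonzero extrema. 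As noted in the discussion, $\lambda_1(\Graph_i)$ remains the first eigenvalue on each $\widetilde\Graph_{i,j}$ with the inherited mixed conditions, so $\denergy[k](\Partition)$ dominates the Dirichlet energy of every piece.

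Next I would count clusters. For the refinement step, the key observation is that a \emph{benign} cluster $\Graph_i$ is a tree all of whose leaves lie in $\partial\Partition$, so $u_i$, vanishing at all leaves and being sign-definite, must attain an interior extremum on every trail joining two distinct Dirichlet vertices; hence a benign cluster contributes at least two Neumann domains. With at most $n$ malign clusters among the $k$, this yields $|\widetilde\Partition| \ge 2(k-n) + n = 2k - n$. For the coarsening step I would perform the gluing at cut vertices: at each $v \in \CutSet(\Graph_\Partition:\Graph)$, glue all incident Neumann domains into a single cluster, choosing real coefficients $\alpha$ so that the glued test function is continuous at $v$ and orthogonal to constants, as in \eqref{eq:udef}; its Rayleigh quotient is then at most $\max_i \lambda_1(\Graph_i) \le \denergy[k](\Partition)$, giving a cluster admissible for the $N$-problem. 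The number of clusters we can lose in this pass is controlled by the number of cut vertices, i.e.\ by $\operatorname{rank}(\Graph_\Partition:\Graph) = k-1+r$: each simple cut, undone, merges at most two clusters and so reduces the count by at most one.

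Combining the two passes, starting from $|\widetilde\Partition| \ge 2k-n$ and losing at most $\operatorname{rank}(\Graph_\Partition:\Graph) = k-1+r$ clusters in the gluing, leaves
\begin{displaymath}
|\Partition'| \ge (2k-n) - (k-1+r) = k+1-n-r,
\end{displaymath}
which is exactly the claimed index. To finish I would verify that the resulting $\Partition'$ is indeed an exhaustive $(k+1-n-r)$-partition of $\Graph$ (after, if necessary, applying Lemma~\ref{lem:Partitionexh} to pass to an exhaustive extension without increasing $\nenergy[]$), and that $\nenergy[k+1-n-r](\Partition') \le \denergy[k](\Partition)$, since every cluster of $\Partition'$ carries a test function orthogonal to constants whose Rayleigh quotient is at most $\denergy[k](\Partition)$. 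If the count $k+1-n-r$ comes out as an inequality $\ge$ rather than exact equality, I would discard surplus clusters (a non-exhaustive partition only has smaller energy after re-extending) to land on precisely $k+1-n-r$ clusters.

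The main obstacle, and where I would spend the most care, is the \textbf{accounting in the gluing pass}: making precise the claim that undoing the cuts of $\Graph_\Partition$ merges the Neumann-domain clusters while reducing their total number by at most the rank $k-1+r$. The subtlety, visible in Figure~\ref{fig:zigzag}, is that a single glued cluster of $\Partition'$ may absorb many Neumann domains and contain interior boundary points of the original partition, so one cannot argue vertex-by-vertex naively. The clean way is to phrase this dually: the glued partition $\Partition'$ arises from $\widetilde\Partition$ by gluing \emph{exactly} at the cut vertices in $\CutSet(\Graph_\Partition:\Graph)$, so $\Graph_{\widetilde\Partition}$ is a cut of $\Graph_{\Partition'}$ of rank equal to the rank of $\Graph_\Partition$, namely $k-1+r$; invoking the additivity of rank (Lemma~\ref{lem:usefultrivialities}) and the lower bound on the number of clusters of an exhaustive partition in terms of rank (the lower inequality in Lemma~\ref{lem:usefulestimate}, applied to $\Partition'$ and $\widetilde\Partition$ as partitions of the intermediate glued graph) converts the rank bookkeeping directly into the cluster count $|\widetilde\Partition| - |\Partition'| \le k-1+r$, avoiding any ad hoc combinatorics and yielding the stated index.
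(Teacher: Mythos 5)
Your construction follows the same route as the paper's proof: refine $\Partition$ into the Neumann domains $\widetilde\Partition$, glue back at the Dirichlet cut vertices to obtain $\Partition'$, bound $|\widetilde\Partition|\ge 2(k-n)+n=2k-n$ via the benign clusters, and use rank additivity (Lemma~\ref{lem:usefultrivialities}) together with the fact that undoing a simple cut merges at most two components to conclude $|\Partition'|\ge (2k-n)-(k-1+r)=k+1-n-r$. That bookkeeping, including your ``dual'' phrasing of the gluing pass, is correct and is essentially the paper's argument (the paper runs the induction via Lemma~\ref{lem:usefultounderstand}).

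The genuine gap is in the energy estimate, which you identify as the routine part while treating the counting as the main obstacle; the paper does the opposite, for good reason. Your claim that ``every cluster of $\Partition'$ carries a test function orthogonal to constants'' tacitly assumes that each such cluster is formed by gluing \emph{at least two} Neumann domains at some cut vertex, so that two free coefficients are available. Nothing in your argument excludes a Neumann domain $\widetilde\Graph_{i,j}$ containing no point of $\CutSet(\Graph_{\Partition}:\Graph)$ at all: such a domain is touched by no gluing and survives as a \emph{singleton} cluster of $\Partition'$. On it the only function your construction supplies is a scalar multiple of $u_i$, which is sign-definite there (being the restriction of a first Dirichlet eigenfunction), so no nonzero multiple has zero mean, and you obtain no bound on $\mu_2$ of that cluster; the inequality $\denergy[k](\Partition) \ge \nenergy[k+1-n-r](\Partition')$ is then unproven. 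This is exactly the point to which the paper devotes the core of its proof: the boundary sets of $\Partition$ and $\Partition'$ are disjoint, exactly one $u_i$ is nonzero on each cluster of $\widetilde\Partition$, and an eigenfunction has no strictly positive local minimum (nor strictly negative local maximum), hence no Neumann domain can lie strictly inside a nodal domain; consequently every cluster of $\Partition'$ contains at least two Neumann domains. Concretely, if $\widetilde\Graph_{i,j}$ avoided the zero set of $u_i$, then $u_i>0$ and $u_i''=-\lambda_1(\Graph_i)\,u_i<0$ on each edge, so $u_i$ is strictly concave edgewise; its minimum over the compact set $\widetilde\Graph_{i,j}$ is attained at a vertex at which all incident derivatives vanish --- by the Kirchhoff condition at interior vertices, by the extremal cut at boundary vertices --- and strict concavity then makes that vertex a strict local maximum along every incident edge, a contradiction. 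You need this argument, or a substitute for it, before the lemma follows. A minor related slip: the nonzero extrema are \emph{boundary} points of the Neumann domains, not interior points as you assert at the outset.
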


\begin{proof}
Let $u_1, \ldots, u_k$ be the respective first Dirichlet eigenfunctions on $\Graph_1, \ldots, \Graph_k$, associated with $\lambda_1(\Graph_1), \ldots, \lambda_1(\Graph_k)$, identified as functions in $H^1(\Graph)$ via extension by zero. Take $\Partition'$ to be the partition of $\Graph$ associated with the cut $\Graph_{\Partition'}$ of $\Graph$ consisting of the maximal cut of $\Graph$ at all points where any of the $u_i$ admit a local nonzero extremum.

We construct a new partition $\widetilde\Partition$ which is the coarsest partition of $\Graph$ finer than both $\Partition$ and $\Partition'$: more precisely, we let $\Graph_{\widetilde\Partition}$ be the unique lowest-rank cut of $\Graph$ which is simultaneously a cut of $\Graph_{\Partition}$ and $\Graph_{\Partition'}$ (for example, for the cuts in (b) and (d) in Figure~\ref{fig:cutgraphsexamples} this would be (e)), and we let $\widetilde\Partition$ be the (exhaustive) partition whose clusters are exactly the connected components of $\Graph_{\widetilde\Partition}$. Then by construction
\begin{equation}
\operatorname{rank}(\Graph_{\Partition'}:\Graph) = \operatorname{rank}(\Graph_{\widetilde\Partition}: \Graph)- \operatorname{rank}(\Graph_\Partition:\Graph).
\end{equation}
It follows from Lemma~\ref{lem:usefultrivialities} that
\begin{equation}
\operatorname{rank}(\Graph_{\widetilde{\Partition}}:\Graph_{\Partition'})=\operatorname{rank}(\Graph_{\widetilde{\Partition}}:\Graph)- \operatorname{rank}(\Graph_{\Partition'}:\Graph)=\operatorname{rank}(\Graph_{\Partition}:\Graph),
\end{equation}
that is, $\operatorname{rank}(\Graph_{\widetilde{\Partition}}:\Graph_{\Partition'}) = k - 1 + r$.

\begin{figure}[htp]
\centering
\begin{subfigure}{.33\textwidth}
\centering
\includegraphics[scale=0.8]{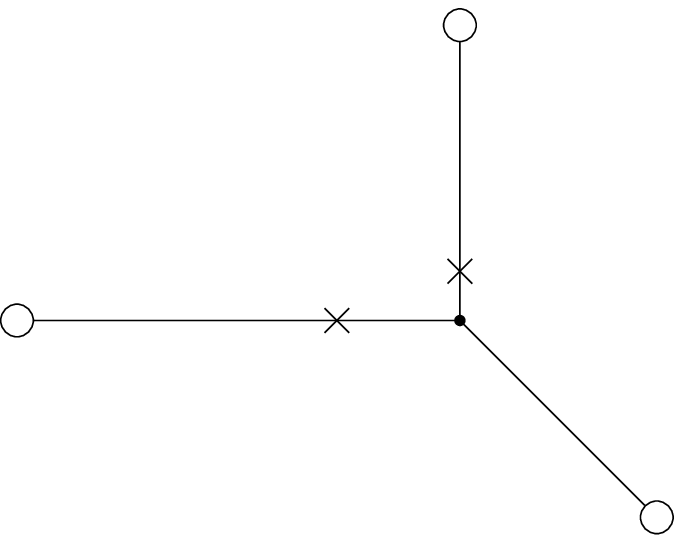}
\end{subfigure}\qquad
\begin{subfigure}{.33\textwidth}
\centering
\includegraphics[scale=0.8]{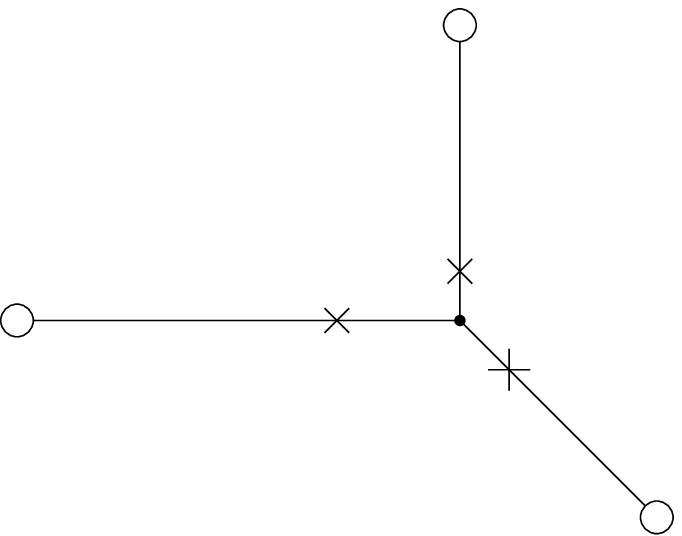}
\end{subfigure}
\caption{On the left is an example of a possible cluster of $\Partition$ with nodes (open circles) and local extrema (crosses) of the eigenfunction. Such an eigenfunction may have multiple local extrema within the cluster. However, the local extrema cannot enclose an area as in the image on the right. In particular, any cluster of $\widetilde{\Partition}$, and thus of $ \Partition'$, necessarily contains a node, that is, a boundary vertex of $\Partition$.}\label{fig:enclosed}
\end{figure}

Next observe that every benign cluster admits at least two Neumann domains and therefore $\widetilde{\Partition}$ has at least $2k-n$ clusters. Lemma~\ref{lem:usefultounderstand} combined with a simple induction argument shows that $\Partition'$ has at least $(2k-n) - (k-1+r) = k+1-n-r$ clusters, since undoing a simple cut (i.e., gluing once) will change the number of connected components of the cut graph by at most one.

We claim that every cluster $\Graph'$ of $\Partition'$ satisfies $\mu_2 (\Graph') \leq \denergy[k](\Partition)$, which will complete the proof of the lemma. To this end, fix such a cluster $\Graph'$ of $\Partition'$; we first observe that $\Graph'$ contains at least two clusters of $\widetilde{\Partition}$, that is, it is formed out of at least two distinct Neumann domains of the eigenfunctions $u_1,\ldots,u_k$ (cf.~also Figure~\ref{fig:zigzag}). To see this, observe that:
\begin{enumerate}[(1)]
\item the boundary sets of $\Partition$ and $\Partition'$ are disjoint: at any cut vertex of $\Graph_\Partition$ all the $u_i \in H^1(\Graph)$ satisfy a Dirichlet condition; hence no such point can also be a local nonzero extremum;
\item by construction, on each cluster of $\widetilde{\Partition}$ there is exactly one eigenfunction $u_i$ which does not vanish identically, and this eigenfunction does not change sign within the cluster;
\item no eigenfunction has a strictly positive local minimum or strictly negative local maximum anywhere; hence no eigenfunction can have a Neumann domain strictly contained in a nodal domain (see also Figure~\ref{fig:enclosed}). 
\end{enumerate}
If $\Graph'$ should coincide with a single cluster $\widetilde\Graph$ of $\widetilde\Partition$, then in particular the two must share a boundary set. This means, by construction of $\widetilde\Partition$ and (1), that $\widetilde\Graph$ contains no Dirichlet points, that is, the only eigenfunction $u_i$ (from (2)) which does not vanish identically in $\widetilde\Graph$, cannot have any zeros whatsoever there. But this is a contradiction to (3).

As a result, we can guarantee the existence of a function $\varphi\in H^1(\Graph')$ such that 
\begin{equation}
\int_{\Graph'} \varphi(x)\, \mathrm dx=0,
\end{equation}
by taking $\varphi$ to be a suitable linear combination of the restrictions of $u_i|_{\Graph'}$, $i=1,\ldots, k$. Then $\varphi$ is a valid test function for $\mu_2(\Graph')$ on the one hand, and on the other the Rayleigh quotient of $\varphi$ cannot exceed $\denergy[k](\Partition) =  \max_{i=1,\ldots,k} \lambda_1 (\Graph_i)$. The latter claim follows from a standard argument: by construction, on every nodal domain of $\varphi$ we have that $\varphi$ is a multiple of some $u_i$, and thus its Rayleigh quotient is no larger than the maximum of the Rayleigh quotients of the $u_i$ on the respective nodal domains. Moreover, $u_i$ satisfies either a standard or a Dirichlet condition at every vertex of this nodal domain, treated as a subgraph of $\Graph'$, and is thus a non-sign-changing classical eigenfunction there, so, as noted earlier, $\lambda_1 (\Graph_i)$ is equal to the Rayleigh quotient of $u_i$ on $\Omega$.
\end{proof}

\begin{proof}[Proof of Theorem~\ref{thm:intromain2}]
The theorem will follow immediately from Lemma~\ref{lem:reverseestimate} once we have shown that any exhaustive partition $\Partition$ of $\Graph$ of rank $k-1+r$, $0 \leq r \leq \beta$, can have at most $n = \beta + |\NeuSet| - r \leq k - r$ malign clusters (recall we are assuming $k \geq \beta + |\NeuSet|$).

We first observe that at most $|\NeuSet|$ clusters can contain at least one leaf of $\Graph$; it remains to show that at most $\beta - r$ clusters can contain a cycle of $\Graph$. But this follows if we can show that the (disconnected) minimal cut graph $\Graph_\Partition$ has Betti number $\beta - r$. This, in turn, follows from a simple induction argument using the definition of $r$ and Lemmata~\ref{lem:usefultounderstand} and~\ref{lem:usefulestimate}: there will exist an intermediate cut of $\Graph$ rank $r$ which remains connected and has Betti number $\beta - r$; $\Graph_\Partition$ is then obtained from this intermediate graph by cutting $k-1$ times in such a way that each cut splits off an additional connected component (cluster of $\Partition$) from the rest of the graph.
\end{proof}

\begin{remark}
\label{rmk:theotherinequalityforintro}
Let $u$ be an eigenfunction of the Laplacian on $\Graph$ and $\Partition$ be its nodal partition with $k=\nu(u)$ nodal domains (clusters). We know that on each the restriction of $u$ to each cluster coincides with the corresponding first eigenfunction on that cluster, with Dirichlet conditions at the boundary points.  Then by construction, the partition $\Partition'$ in Lemma~\ref{lem:reverseestimate} coincides with the partition of $\Graph$ into the Neumann domains of $u$. The proof of Theorem~\ref{thm:intromain2} in particular ensures that this partition contains at least
\begin{equation}
\label{eq:theotherinequalityforintro}
\xi(u) \ge \nu(u)+1-\beta-|\NeuSet|
\end{equation}
clusters, the Neumann domains of $u$. Combining \eqref{eq:theotherinequalityforintro} and Remark~\ref{rmk:onepartoftheinequalityinintro}, we recover \eqref{eq:motivationalnodalstuff}.
\end{remark}

\section{Application: Spectral inequalities}
\label{sec:spectralinequalities}

In this section we will prove Corollary~\ref{thm:intromainRohleder}, relating the interlacing inequalities of Theorems~\ref{thm:intromain1} and~\ref{thm:intromain2} to the eigenvalues of the Laplacian on the whole graph $\Graph$ with Dirichlet and standard vertex conditions. Afterwards, we will discuss their relation with concrete estimates on the optimal energies $\doptenergy[k] (\Graph)$, $\noptenergy[k] (\Graph)$ in terms of geometric and topological properties of $\Graph$ and in particular prove Corollaries~\ref{thm:firstapplication} and~\ref{cor:secondapplication}; complementary estimates were obtained in \cite[Theorems~3.1 and~3.2]{HKMP20}. We recall that $\lambda_k (\Graph,\VertexSet) =: \lambda_k (\Graph)$ and $\mu_k (\Graph)$ are, respectively, the $k$-th eigenvalue, counted with multiplicities, of the Laplacian with Dirichlet conditions at \emph{all} vertices of $\Graph$ (which thus reduces to a disjoint union of $n$ intervals), and of the Laplacian with standard conditions at all vertices of $\Graph$.

\begin{proof}[Proof of Corollary~\ref{thm:intromainRohleder}]
We clearly only have to prove the first and the last inequalities, the middle one being contained in Theorem~\ref{thm:intromain1}. For the first inequality, $\noptenergylax[k](\Graph) \leq \lambda_k (\Graph)$, we observe, firstly, that for any finite interval $I \subset \R$ and $j \in \N$, $\lambda_j (I) = \mu_{j+1} (I)$.

We suppose that for each $i=1,\ldots,n$,
\begin{displaymath}
	j_i := \max \{ j \geq 0 : \lambda_j (e_i) \leq \lambda_k (\Graph)\},
\end{displaymath}
so that the collection $\{\lambda_\ell (e_i) : 1 \leq \ell \leq j_i \}$ gives exactly the first $k$ eigenvalues $\lambda_1 (\Graph), \ldots, \lambda_k (\Graph)$, counted with multiplicities (if $\lambda_k (\Graph)$ is multiple, meaning at least two edges have the same eigenvalue corresponding to $\lambda_k (\Graph)$, then we arbitrarily choose a certain number to be excluded in order to guarantee that $\{\lambda_\ell (e_i) : 1 \leq \ell \leq j_i \}$ does in fact consist of exactly $k$ elements, the largest of which is $\lambda_k (\Graph)$).

For each $i=1,\ldots,n$ for which $j_i \geq 1$, we partition the edge $e_i$ into $j_i$ equal subintervals $e_{i,1},\ldots,e_{i,j_i}$, each of which is a nodal domain for the eigenfunctions of $\lambda_{j_i} (e_i)$, so that, with our first observation, $\mu_2 (e_{i,1}) = \ldots = \mu_2 (e_{i,j_i}) = \lambda_1 (e_{i,1}) = \lambda_{j_i} (e_i)$. Since $\sum_{i=1}^n j_i = k$, the (non-exhaustive) partition $\Partition := \{ e_{i,\ell} : 1 \leq \ell \leq j_i,\, 1 \leq i \leq n\}$ is a $k$-partition of $\Graph$ such that
\begin{displaymath}
	\nenergy[k] (\Partition) = \max_{i,\ell} \mu_2 (e_{i,\ell}) = \max_{i} \lambda_{j_i} (e_i) = \lambda_k (\Graph).
\end{displaymath}
The inequality now follows from Lemma~\ref{lem:Partitionexh}. The last inequality,
\begin{displaymath}
	\mu_k (\Graph) \leq \doptenergy[k](\Graph),
\end{displaymath}
follows from a standard argument involving the min-max characterization of $\mu_k (\Graph)$, see also \cite[Proposition~8.5]{KeKuLeMu20} for a detailed proof.
\end{proof}

\subsection{Upper bounds on $\doptenergy[k]$} We now turn to Corollary~\ref{thm:firstapplication},and more generally to concrete estimates from above on $\doptenergy[k] (\Graph)$ and $\noptenergylax[k] (\Graph)$ in terms of geometric and metric properties of $\Graph$. We recall that we wish to prove \eqref{eq:firstapplication}, which we reproduce here for the sake of convenience:
\begin{equation}
\label{eq:recall-firstapplication}
	\mu_k(\Graph) \leq \doptenergy[k](\Graph) \leq \frac{\pi^2}{L^2}(k+n+\beta-2)^2
\end{equation}
for all $k\ge \max\{n+1-\beta, 1\}$, where $n \geq 1$ is any number such that there exists an $n$-partition of $\Graph$ each of whose clusters consists of a single \emph{Eulerian path} (clearly $n \leq |\EdgeSet|$ since we can always take the $|\EdgeSet|$-partition of $\Graph$ into its individual edges).

\begin{proof}[Proof of Corollary~\ref{thm:firstapplication}]
From \cite[Theorem~5.3]{HKMP20} we have
\begin{equation}\label{eq:hkmp20n}
	\noptenergylax[k] (\Graph) \leq \frac{\pi^2}{L^2}\big(k + n-1\big)^2
\end{equation}
for all $k \geq n$, with $n \geq 1$ as just described. Combining this with Corollary~\ref{thm:intromainRohleder} we obtain
\begin{equation}
	\mu_k(\Graph)\le \doptenergy[k](\Graph)\le \noptenergy[k-1+\beta](\Graph) \le \frac{\pi^2}{L^2} (k+n+\beta-2)^2  
\end{equation}
for $k\ge \max\{n+1-\beta, 1\}$.
\end{proof}

We observe that our inequality \eqref{eq:firstapplication} involves rather different quantities from the upper bound in \cite[Theorem~5.1]{HKMP20}
\begin{equation}\label{eq:hkmp20d}
	\mu_k(\Graph) \le \doptenergy[k](\Graph) \le \frac{\pi^2}{L^2} \left ( k+ \left ( |\EdgeSet| -1 - \left\lfloor\frac{|\NeuSet|}{2}\right\rfloor \right ) \right ),
\end{equation}
where $|\NeuSet|$ is, as usual, the number of degree one vertices of $\Graph$, as well as what is possibly the best general upper bound on $\mu_k(\Graph)$ to date, namely \cite[Theorem~4.9]{BKKM17}
\begin{equation}
\label{eq:bkkm}
	\mu_k (\Graph) \leq \frac{\pi^2}{L^2}(k+\tfrac{3}{2}\beta+\tfrac{1}{2}|\NeuSet|-2)^2
\end{equation}
for all $k\geq 1$ (see also \cite[Theorem~1.2]{Ari16} for an earlier iteration). We next give a few examples which show that at least for some graphs our bound \eqref{eq:firstapplication} can be better than \eqref{eq:hkmp20d} and even \eqref{eq:bkkm}; in the next subsection, we will turn to the corresponding lower bounds.

\begin{figure}
\centering
\begin{subfigure}{.33\textwidth}
\centering
\includegraphics{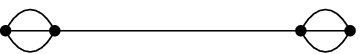}
\end{subfigure}
\begin{subfigure}{.33\textwidth}
\centering
\includegraphics{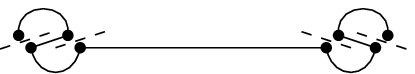}
\end{subfigure}
\caption{A graph given by two $3$-pumpkins connected by an edge. The graph admits an Eulerian path seen on the right side.}
\label{fig:unfolding}
\end{figure}

\begin{example}
We consider the \emph{pumpkin dumbbell} depicted in Figure~\ref{fig:unfolding}, consisting of two $3$-pumpkins connected by an edge (interestingly, the relative edge lengths are irrelevant for these bounds). Then by Corollary~\ref{thm:firstapplication} we have $\doptenergy[k](\Graph)\le \frac{\pi^2}{L^2} (k+4)^2$ for all $k \geq 1$, while since $|\EdgeSet|=7$ and $|\NeuSet|=0$ the upper bound in \eqref{eq:hkmp20d} reads $\doptenergy[k](\Graph)\le \frac{\pi^2}{L^2} (k+7)^2$ (for sufficiently large $k$). Introducing thicker pumpkins would lead to the same conclusion, that \eqref{eq:firstapplication} is better.
\end{example}

\begin{figure}[h!]
\includegraphics{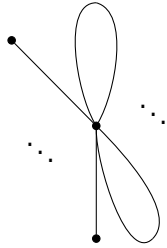}
\caption{Stower graphs as an example of a class of graphs for which \eqref{eq:firstapplication} is better than \eqref{eq:bkkm}}.\label{fig:stower-graphs}
\end{figure}
\begin{example}
The bound on $\mu_k(\Graph)$ in \eqref{eq:firstapplication} is better than \eqref{eq:bkkm} for all flower graphs (where $|\NeuSet|=0$ and $n=1$), and more generally \emph{stower graphs} (flowers with a finite number of pendant edges attached to the central vertex, i.e., a union of a flower and a star; see Figure~\ref{fig:stower-graphs}). These were introduced in \cite{BanLev17}, where they played a major role in the minimization of $\mu_2(\Graph)$ among various classes of graphs. There exist such stower graphs with any $\beta \geq 1$ and $|\NeuSet|\geq 1$ pendant edges, while we certainly have $n \leq \lceil \frac{|\NeuSet|}{2} \rceil$, leading to the assertion that \eqref{eq:firstapplication} is better. Finally, the respective upper bounds coincide for star graphs for which $|\NeuSet|$ is even, since then $\beta = 0$ and $n=\frac{|\NeuSet|}{2}$.
\end{example}

\begin{figure}[h!]
\includegraphics[scale=.5]{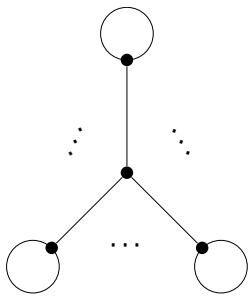}
\caption{Windmill graphs are examples of graphs for which the upper estimate in \eqref{eq:bkkm} is attained. As it turns out this is also the case in \eqref{eq:firstapplication} when the graph has an even number of pendant lassos.}
\label{fig:windmill}
\end{figure}

\begin{example}
Let $\Graph$ be a \emph{windmill graph} $\mathcal W^{2m}$, $m \geq 1$, which consists of $2m$ lassos(\emph{blades}) glued together at a central vertex (see Figure~\ref{fig:windmill}); we assume that all the loops have a common length $\ell > 0$ and the bridges a common length $s>0$. It was shown in \cite{KuSe18} that, if the ratio $\ell/s = 4$, then there is equality in \eqref{eq:bkkm} for a sequence of eigenvalues, for any number of blades. In particular, since $\beta = 2m$,
\begin{equation}
\label{eq:sharpestimatewindmill}
	\mu_k(\mathcal W^{2m})= \frac{\pi^2}{L^2} (k+ 3m-2)^2
\end{equation}
whenever $k=2m(1+5j)+2$ for some $j \geq 1$. Note that $\mathcal W^{2m}$ can be partitioned into $n=m$ clusters, each consisting of exactly two blades glued together (like the dumbbell pumpkin of Figure~\ref{fig:unfolding} but with loops in place of the $3$-pumpkins). This means that the upper bound in \eqref{eq:firstapplication} is also equal to $\frac{\pi^2}{L^2} (k+ 3m-2)^2$; hence we have equality everywhere,
\begin{equation}
	\mu_k(\mathcal W^{2m}) = \doptenergy[k](\mathcal W^{2m}) = \noptenergylax[k-1+\beta](\mathcal W^{2m}) = \frac{\pi^2}{L^2} (k+ 3m-2)^2
\end{equation}
for all $k\geq 1$ of the form $k=2m(1+5j)+2$, $j \geq 1$. In particular, Theorem~\ref{thm:intromain1} is sharp for a subsequence of the eigenvalues of the windmill graph $\mathcal W^{2m}$ when $\ell/s=4$.
\end{example}

Very recently, all graphs which attain the upper estimate in \eqref{eq:bkkm} were classified in \cite{Ser21}. We leave it as an open question whether similar results can be shown for the inequalities from this paper, especially for Theorem~\ref{thm:intromain1}.

\subsection{Lower bounds on $\doptenergy[k](\Graph)$} We start by giving the short proof of Corollary~\ref{cor:secondapplication}; for convenience we recall the estimate to be proved \eqref{eq:secondapplication}
\begin{displaymath}
	\doptenergy[k](\Graph) \geq \frac{\pi^2}{L^2}(k+1 - \beta - |\NeuSet|)^2.
\end{displaymath}

\begin{proof}[Proof of Corollary~\ref{cor:secondapplication}]
By Theorem~\ref{thm:intromain2} and the (sharp) estimate $\noptenergylax[k](\Graph) \geq \frac{\pi^2k^2}{L^2}$ ($k\geq 1$) of \cite[Theorem~3.1]{HKMP20},
\begin{displaymath}
	\doptenergy[k](\Graph) \geq \noptenergylax[k+1-\beta-|\NeuSet|](\Graph) \geq \frac{\pi^2}{L^2}(k+1 - \beta - |\NeuSet|)^2
\end{displaymath}
for all $k\geq \beta + |\NeuSet|$.
\end{proof}

Our estimate \eqref{eq:secondapplication} may compared with the direct lower bound of \cite[Corollary~6.1]{HKMP20},
\begin{equation}
\label{eq:lower-d-theirs}
	\doptenergy[k](\Graph)\geq \frac{\pi^2}{4k L^2} \left ( k^3 + 3  (k - \beta - |\NeuSet|)^3 \right ),
\end{equation}
which holds for sufficiently large $k \geq \beta + |\NeuSet|$. We note that the coefficient of $k$ in the bound in \eqref{eq:secondapplication} is $2(1-\beta-|\NeuSet|)\frac{\pi^2}{L^2}$, to be compared with $-\frac{9}{4}(\beta + |\NeuSet)|\frac{\pi^2}{L^2}$ in \eqref{eq:lower-d-theirs}. Thus \eqref{eq:secondapplication} is \emph{always} better than \eqref{eq:lower-d-theirs}. Other, better, lower bounds in \cite{HKMP20} (e.g. Corollary~6.1, where the factor of $\beta$ is suppressed in \eqref{eq:lower-d-theirs}) are only available for much larger $k$, and are still worse than \eqref{eq:secondapplication} for large classes of graphs, including trees.

\appendix
\section{Rigid partitions}
\label{appendix:classes}

In the current work we always considered general (``connected'') partitions of graphs, where cuts could be made anywhere, including in the middle of the partition elements (the clusters). This was natural for the theory developed: when creating partitions by cutting along eigenfunctions, as was central to the proofs of Theorems~\ref{thm:intromain1} and~\ref{thm:intromain2}, \textit{a priori} one has no control over where one is cutting.

On the other hand, inspired by the situation on domains, a second, more restricted class of partitions was also introduced in \cite{KeKuLeMu20}, where the graph can only be cut at the boundary of the partition clusters and not in their interior. This corresponds to the idea that when partitioning a domain $\Omega$ into pieces $\Omega_i$, there should be no ``extra'' incisions made in the interior of the $\Omega_i$; that is, $\partial\Omega_i$ should correspond exactly to where $\Omega_i$ meets the other $\Omega_j$. This second class may be defined as follows.

\begin{definition}
\label{def:rigid}
We say a (connected) $k$-partition $\Partition=(\Graph_1, \ldots, \Graph_k)$ of $\Graph$ is \emph{rigid} if its boundary set $\partial\Partition$ (see Definition~\ref{def:cutandboundary}) coincides with the cut set $\CutSet(\Graph_\Partition:\Graph)$ (see Definition~\ref{def:cut}). We denote the set of all exhaustive rigid $k$-partitions of $\Graph$ by $\mathfrak{R}_k = \mathfrak{R}_k (\Graph)$.
\end{definition}

\begin{figure}
	\begin{minipage}{.35\textwidth}\includegraphics{examplegraph.eps} \end{minipage}\hspace{3em}
	\begin{minipage}{.35\textwidth}\includegraphics{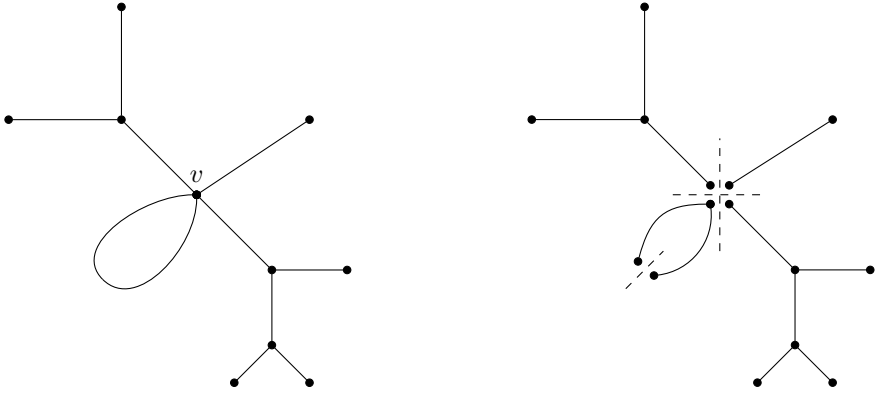} \end{minipage}
	\caption{An example of a non-rigid 4-partition (right) of the graph from Figure~\ref{fig:cutgraphsexamples}(a) (reproduced here, left). The partition is not rigid due to the additional cut in the middle of the loop.}
\end{figure}

The corresponding minimization problems were denoted as follows in \cite[Section~4]{KeKuLeMu20}:
\begin{displaymath}
	\mathcal{L}^{N,r}_{k,p}(\Graph) :=\begin{cases} \inf_{\parti = (\Graph_1,\ldots,\Graph_k) \in \mathfrak{R}_k}\left(\frac{1}{k}\sum\limits_{i=1}^k \mu_2(\Graph_i)^p\right)^{1/p}
	\qquad &\text{if } p \in (0,\infty),\\ \inf_{\parti = (\Graph_1,\ldots,\Graph_k) \in \mathfrak{R}_k}\max\limits_{i=1,\ldots,k} \mu_2(\Graph_i) 
	\qquad &\text{if } p = \infty, \end{cases}
\end{displaymath}
and
\begin{displaymath}
	\mathcal{L}^{N,c}_{k,p}(\Graph) :=\begin{cases} \inf_{\parti = (\Graph_1,\ldots,\Graph_k) \in \mathfrak{C}_k}\left(\frac{1}{k}\sum\limits_{i=1}^k \mu_2(\Graph_i)^p\right)^{1/p}
	\qquad &\text{if } p \in (0,\infty),\\ \inf_{\parti = (\Graph_1,\ldots,\Graph_k) \in \mathfrak{C}_k}\max\limits_{i=1,\ldots,k} \mu_2(\Graph_i) 
	\qquad &\text{if } p = \infty. \end{cases}
\end{displaymath}
Thus our $\noptenergylax[k] (\Graph)$ equals $\mathcal{L}^{N,c}_{k,\infty}(\Graph)$ in \cite{KeKuLeMu20}. Clearly $\mathfrak{R}_k \subset \mathfrak{C}_k$, whence $\mathcal{L}^{N,r}_{k,p}(\Graph) \geq \mathcal{L}^{N,c}_{k,p}(\Graph)$ always (in the Dirichlet case, minimizing over $\mathfrak{R}_k$ or $\mathfrak{C}_k$ makes no difference \cite[Lemma~4.3]{KeKuLeMu20}, justifying the common notation $\mathcal{L}^D_{k,p}(\Graph)$).

We briefly summarize how our Theorems~\ref{thm:intromain1} and~\ref{thm:intromain2} can be adapted to rigid partitions: as noted, since $\mathcal{L}^{N,r}_{k,\infty}(\Graph) \geq \noptenergylax[k] (\Graph)$, Theorem~\ref{thm:intromain1} can be obtained for free:
\begin{equation}
\label{eq:rigid-intromain1}
	\mathcal{L}^{N,r}_{k,\infty}(\Graph) \geq \mathcal{L}^{D}_{k+1-\beta}(\Graph)
\end{equation}
for all $k \geq \max\{\beta,1\}$. It is not at all clear whether Theorem~\ref{thm:intromain2} should hold in the rigid case; here, we merely observe that it will hold for $k$ large enough since, by \cite[Theorem~3.3]{HKMP20}, $\mathcal{L}^{N,r}_{k,\infty}(\Graph) = \noptenergylax[k] (\Graph)$ for sufficiently large $k \geq 1$ (how large depending on the graph): there exists some $k_0 = k_0 (\Graph) \geq \beta + |\NeuSet|$ such that
\begin{equation}
\label{eq:rigid-intromain2}
	\mathcal{L}^{D}_{k}(\Graph) \geq \mathcal{L}^{N,r}_{k+1-\beta-|\NeuSet|,\infty}(\Graph)
\end{equation}
for all $k \geq k_0$. In principle it would be possible to estimate $k_0$ explicitly in terms of explicit features of the graph, but we expect the effort necessary to do so would be disproportionate to its value.

\bibliographystyle{plain}

\end{document}